\def\normo#1{\left\|#1\right\|}
\def\abs#1{|#1|}
\def\brk#1{\left(#1\right)}
\def\rev#1{\frac{1}{#1}}
\def\half#1{\frac{#1}{2}}
\def\norm#1{\|#1\|}
\newcommand{\N}{{\mathbb N}}
\newcommand{\A}{{\mathcal A}}
\newcommand{\R}{{\mathbb R}}
\newcommand{\C}{{\mathbb C}}
\newcommand{\Z}{{\mathbb Z}}
\newcommand{\les}{{\lesssim}}
\newtheorem{theorem}{Theorem}
\newtheorem{proposition}{Proposition}
\newtheorem{definition}{Definition}
\newtheorem{lemma}{Lemma}
\newtheorem{corollary}{Corollary}
\newtheorem{remark}{Remark}
\begin{document}
\title{{\bf Decay estimates for a class of wave equations}}

\author{\bf Zihua Guo, Lizhong Peng, Baoxiang Wang \date{}\\
{\small \it
LMAM, School of Mathematical Sciences, Peking University, Beijing 100871, China}\\
{\small E-mails: zihuaguo, lzpeng, wbx@math.pku.edu.cn} }\maketitle


{\bf Abstract}\quad In this paper we use a unified way studying  the
decay estimate for a class of dispersive semigroup given by
$e^{it\phi(\sqrt{-\Delta})}$, where $\phi: \mathbb{R}^+\rightarrow
\mathbb{R}$ is  smooth away from the origin. Especially, the decay
estimates for the solutions of the Klein-Gordon equation and the
beam equation are simplified and slightly improved.

{\bf Keywords:}\quad Decay estimates, dispersive wave equations

{\bf 2000 MS Classification:} \ 42B25, \ 35F20

\section{Introduction}

In this paper, we study the decay estimate for a class of dispersive
equations:
\begin{equation}\label{e1}
i\partial_tu=-\phi(\sqrt{-\Delta})u+f,\ \ u(0)=u_0(x),
\end{equation}
where $\phi: \mathbb{R}^+\rightarrow \mathbb{R}$ is smooth,
$f(x,t),\ u(x,t):\R^n\times \R\rightarrow\C,\ n\geq 1$, and
$\phi(\sqrt{-\Delta})u=\mathscr{F}^{-1}\phi(|\xi|)\mathscr{F}u$.
Here $\mathscr{F}$ denotes Fourier transform.

Many dispersive wave equations reduce to this type, for instance,
the Schr\"odinger equation ($\phi(r)=r^2$), the wave equation
($\phi(r)=r$), the Klein-Gordon equation ($\phi(r)=\sqrt{1+r^2}$)
and the beam equation ($\phi(r)=\sqrt{1+r^4}$). In 1977,  Strichartz
\cite{STR} derived the priori estimates of the solution to
\eqref{e1} in space-time norm by the Fourier restriction theorem of
Stein and Tomas. Later, his results was improved via a dispersive
estimate and duality argument (cf. \cite{KT},\cite{CW} and
references therein). The dispersive estimate
\begin{equation}\label{e2}
\norm{e^{it\phi(\sqrt{-\Delta})}u_0}_{X} \lesssim
|t|^{-\theta}\norm{u_0}_{X'}
\end{equation}
plays a crucial role, where $X'$ is the dual space of $X$. Applying
\eqref{e2}, together with a standard argument (\cite{KT},
\cite{Wang}), we can immediately get the Strichartz estimates.  When
$\phi$ is a homogenous function of order $m$, namely, $\phi(\lambda
r)=\lambda^m \phi(r)$ for $ \lambda >0$, one can easily obtain a
dispersive estimate (\ref{e2}) by a theorem of Littman and dyadic
decomposition, which is related to the rank of the Hessian
$(\frac{\partial^2 \phi}{\partial \xi_i \xi_j})$. This technique
also works very well when $\phi$ is not radial (\cite{Pecher}).
However, this issue becomes very complicated when $\phi$ is not
homogenous, the main reason is that the scaling constants can not be
effectively separated from the time, cf. Brenner \cite{Brenner},
Lavandosky \cite{LEV}. In this paper, we overcome this difficulty
via frequency localization by separating $\phi$ between high and low
frequency. In higher spatial dimensions, since $\phi$ is radial, we
can reduce the problem to an oscillatory integral in one dimension
by using the Bessel function. Using the dyadic decomposition and
some properties of the Bessel function, we can derive a decay
estimate, as desired. Some earlier ideas on this technique can be
found in \cite{BKS}, \cite{Brenner}, \cite{LEV}.

Since $\phi$ is not homogenous, our idea is to treat the high
frequency and the low frequency in different scales. We will assume
$\phi: \mathbb{R}^+\rightarrow \mathbb{R}$ is smooth and satisfies

(H1) There exists $m_1>0$, such that for any $\alpha \geq 2$ and
$\alpha \in \N$,
\begin{eqnarray*}
|\phi'(r)|\sim r^{m_1-1}\ and\ |\phi^{(\alpha)}(r)|\lesssim
r^{m_1-\alpha},\ \  \ r\geq 1.
\end{eqnarray*}

(H2)  There exists  $m_2>0$, such that for any $\alpha \geq 2$ and
$\alpha \in \N$,
\begin{eqnarray*}
|\phi'(r)|\sim r^{m_2-1}\ and\ |\phi^{(\alpha)}(r)|\lesssim
r^{m_2-\alpha},\ \ 0<r<1.
\end{eqnarray*}

(H3) There exists  $ \alpha_1$, such that
\begin{eqnarray*}
|\phi''(r)|\sim r^{\alpha_1-2}\ \ r\geq 1.
\end{eqnarray*}

(H4) there exists  $\alpha_2$, such that
\begin{eqnarray*}
|\phi''(r)|\sim r^{\alpha_2-2}\ \ 0<r<1.
\end{eqnarray*}

\begin{remark} \rm
(H1) and (H3) reflect the homogeneous order of $\phi$ in high
frequency. If $\phi$ satisfies (H1) and (H3), then $\alpha_1\leq
m_1$. Similarly, the homogeneous order of $\phi$ in low frequency is
described by (H2) and (H4). If $\phi$ satisfies (H2) and (H4), then
$\alpha_2\geq m_2$. The special case $\alpha_2= m_2$ happens in the
most of time.
\end{remark}

Let $\Phi(x): \mathbb{R}^n\to [0,1]$ be a even, smooth radial
function such that \mbox{supp}$\Phi \subseteq \{x:\abs{x}\leq 2\}$,
and $\Phi(x)=1$, if $\abs{x}\leq 1$. Let $\psi(x)=\Phi(x)-\Phi(2x)$,
and $\triangle_k$ be the Littlewood-Paley projector, namely
$\triangle_k f=\mathscr{F}^{-1}\psi(2^{-k} \xi)\mathscr{F}f$. Let
$P_{\leq 0} f=\mathscr{F}^{-1}\Phi(\xi)\mathscr{F} f$. Now we state
our main result:

\begin{theorem}\label{t1}
Assume $\phi: \mathbb{R}^+\rightarrow \mathbb{R}$ is smooth away
from origin, We have the following results.\\
(a) For $k \geq 0$, $\phi$ satisfies (H1), then
\begin{equation}\label{e3}
\norm{e^{it\phi(\sqrt{-\Delta})}\triangle_k u_0}_\infty \lesssim
|t|^{-\theta}2^{k(n-m_1\theta)}\norm{u_0}_1,\ 0\leq \theta \leq
\frac{n-1}{2}.
\end{equation}
In addition, if  $\phi$ satisfies (H3), then
\begin{equation}\label{e3-1}
\norm{e^{it\phi(\sqrt{-\Delta})}\triangle_k u_0}_\infty \lesssim
|t|^{-\half{n-1+\theta}}2^{k(n-\half{m_1(n-1+\theta)}-\half{\theta(\alpha_1-m_1)})}\norm{u_0}_1,\
0\leq \theta \leq 1.
\end{equation}
(b) For $k < 0$, $\phi$ satisfies (H2), then
\begin{eqnarray}\label{e4}
\norm{e^{it\phi(\sqrt{-\Delta})}\triangle_k u_0}_\infty \lesssim
|t|^{-\theta}2^{k(n-m_2\theta)}\norm{u_0}_1,\ 0\leq \theta \leq
\frac{n-1}{2}.
\end{eqnarray}
In addition, if  $\phi$ satisfies (H4), then
\begin{equation}\label{e4-1}
\norm{e^{it\phi(\sqrt{-\Delta})}\triangle_k u_0}_\infty \lesssim
|t|^{-\half{n-1+\theta}}2^{k(n-\half{m_2(n-1+\theta)}-\half{\theta(\alpha_2-m_2)})}\norm{u_0}_1,\
0\leq \theta \leq 1.
\end{equation}
(c) If $\phi$ satisfies (H2), then
\begin{equation}\label{e5}
\norm{e^{it\phi(\sqrt{-\Delta})}P_{\leq0} u_0}_\infty \lesssim
(1+|t|)^{-\theta}\norm{u_0}_1,\  \theta = \min \left(\frac{n}{m_2},
\half{n-1}\right).
\end{equation}
In addition,  if(H4) holds and $\alpha_2=m_2$, then
\begin{eqnarray} \label{e6}
\norm{e^{it\phi(\sqrt{-\Delta})}P_{\leq0} u_0}_\infty \lesssim
(1+|t|)^{-\theta}\norm{u_0}_1,\  \theta = \min \left(\frac{n}{m_2},
\half{n} \right).
\end{eqnarray}
\end{theorem}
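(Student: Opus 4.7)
\textbf{Proof plan for Theorem \ref{t1}.}
By Young's inequality it suffices to estimate the $L^\infty$-norm of the convolution kernel
$$K_k(x,t)=\int_{\R^n}e^{ix\cdot\xi+it\phi(|\xi|)}\psi(2^{-k}\xi)\,d\xi,$$
and analogously with $\Phi(\xi)$ in place of $\psi(2^{-k}\xi)$ for part (c). Since $\phi$ is radial in $\xi$, I would pass to polar coordinates and express the angular integral through a Bessel function, obtaining
$$K_k(x,t)=c_n|x|^{-\frac{n-2}{2}}\int_0^\infty e^{it\phi(r)}\psi(2^{-k}r)\,r^{\frac{n}{2}}J_{\frac{n-2}{2}}(r|x|)\,dr$$
(with the obvious one-dimensional analogue when $n=1$). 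The whole argument then reduces to a one-dimensional oscillatory integral, controlled by a combination of van der Corput's lemma and the known asymptotics/size of $J_{\nu}$.

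For part (a) I would substitute $r=2^k s$ so that the integration variable $s\in[1/2,2]$ is $k$-independent, and split according to the size of $\rho:=2^k|x|$. When $\rho\lesssim 1$ one uses the power-series bound $|J_\nu(\rho)|\lesssim \rho^\nu$ and trivially integrates to obtain the $\theta=0$ endpoint bound $|K_k|\lesssim 2^{kn}$. When $\rho\gtrsim 1$ one inserts the standard asymptotic $J_{\nu}(\rho)=\rho^{-1/2}(e^{i\rho}a_{+}(\rho)+e^{-i\rho}a_{-}(\rho))$ with $|a_\pm^{(j)}(\rho)|\lesssim \rho^{-j}$, reducing to oscillatory integrals in $s$ with phase $\Psi_{\pm}(s)=t\phi(2^k s)\pm 2^k|x|\,s$. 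From (H1) we have $\Psi_{\pm}'(s)\sim t\cdot 2^{km_1}$ away from a critical point, and (H1) together with integration-by-parts of order $\lfloor (n-1)/2\rfloor$ (choosing how many derivatives to throw on the Bessel amplitude versus the phase) yields the endpoint $\theta=(n-1)/2$ bound $|t|^{-(n-1)/2}2^{k(n-m_1(n-1)/2)}$; interpolating against the trivial $\theta=0$ endpoint gives \eqref{e3}. For \eqref{e3-1}, (H3) supplies the non-degeneracy $|\Psi_{\pm}''|\sim|t|\,2^{2k}\cdot 2^{k(\alpha_1-2)}=|t|\,2^{k\alpha_1}$, so van der Corput of order two at $\theta=1$ produces the gain $(|t|\,2^{k\alpha_1})^{-1/2}$, which combined with the $\rho^{-(n-1)/2}$ Bessel decay delivers the endpoint $|t|^{-n/2}2^{k(n-m_1(n-1)/2-\alpha_1/2)}$; interpolation again gives \eqref{e3-1}. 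Part (b) is the same argument with $m_1,\alpha_1$ replaced by $m_2,\alpha_2$ using (H2) and (H4), except that the rescaling $r=2^k s$ now lives in the region $2^k s<1$ where (H2), (H4) apply.

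For part (c) I would perform a dyadic decomposition $P_{\leq 0}=\sum_{k\leq 0}\triangle_k$ (plus a harmless lowest piece) and sum \eqref{e4} or \eqref{e4-1} over $k<0$. The $L^1\to L^\infty$ bound of $\triangle_k$ itself is $\lesssim 2^{kn}$, which is summable as $k\to-\infty$; this gives the $\theta=n/m_2$ regime when $|t|$ is large by choosing the dispersive exponent that balances $|t|^{-\theta}2^{k(n-m_2\theta)}$ against the trivial bound, i.e.\ summing over $k$ in the two regimes $2^{km_2}|t|\lessgtr 1$. The uniform-in-$t$ part $(1+|t|)$ comes from treating $|t|\leq 1$ trivially. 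The bound \eqref{e6} is obtained in the same manner from \eqref{e4-1} using the improved dispersive gain available under (H4) with $\alpha_2=m_2$.

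The main technical obstacle is the oscillatory integral step: one must track carefully how the scaling $r=2^k s$ interacts both with the phase $t\phi(r)$ and with the Bessel factor $J_{(n-2)/2}(r|x|)$, in particular to ensure that the van der Corput / stationary-phase bounds are uniform in $k$ and that the amplitude derivatives of the Bessel asymptotic do not destroy the gain. The hypotheses (H1)--(H4) are designed exactly so that $\phi$ and $\phi''$ behave like a single power on each dyadic shell, which is what allows a clean homogeneous scaling on each piece; separating high and low frequency is essential because the effective homogeneity of $\phi$ differs in the two regimes.
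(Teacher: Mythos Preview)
Your overall strategy matches the paper's (Young's inequality, radial reduction to a Bessel integral, dyadic rescaling $r=2^ks$, van der Corput under (H3)/(H4)), but there is a genuine gap in the regime $\rho=2^k|x|\lesssim 1$ that breaks the argument for \eqref{e3} and \eqref{e4}. In that regime you record only the trivial bound $|K_k|\lesssim 2^{kn}$, which carries no $|t|$-decay. But \eqref{e3} is a statement uniform in $x$: you need $|K_k(x,t)|\lesssim|t|^{-(n-1)/2}2^{k(n-m_1(n-1)/2)}$ also when $|x|\lesssim 2^{-k}$, and for $|t|\gg 2^{-km_1}$ the trivial bound is strictly weaker. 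Interpolation does not rescue this, since you cannot interpolate between an estimate valid for all $x$ and one established only for $\rho\gtrsim 1$. The paper's remedy (its Case 1) is that for $\rho\lesssim 1$ the Bessel factor $(rs)^{-(n-2)/2}J_{(n-2)/2}(rs)$ and all its $r$-derivatives are bounded (Lemma \ref{l1}(i),(ii)), while the phase $t\phi(2^ks)$ is non-stationary with $|\partial_s[t\phi(2^ks)]|\sim|t|2^{km_1}$ by (H1); repeated integration by parts with the operator $D_r=(it2^k\phi'(2^kr))^{-1}\partial_r$ then yields $|K_k|\lesssim|t|^{-q}2^{k(n-m_1q)}$ for every integer $q\geq 0$.

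A second, related point: in the $\rho\gtrsim 1$ regime the endpoint $\theta=(n-1)/2$ does not come from ``integration by parts of order $\lfloor(n-1)/2\rfloor$.'' One must split further into a non-stationary region, where $|\Psi_\pm'|\gtrsim|t|2^{km_1}$ and integration by parts of arbitrary order gives any $|t|$-decay, and a stationary region where $2^k|x|\sim|t|2^{km_1}$; in the latter no integration by parts is performed and the exponent $(n-1)/2$ comes purely from the Bessel amplitude decay $\rho^{-(n-1)/2}\sim(|t|2^{km_1})^{-(n-1)/2}$. This is precisely the paper's Case 2a/2b split.

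For part (c), summing \eqref{e4} over $k\leq 0$ as you propose gives every $\theta<\min(n/m_2,(n-1)/2)$, but at the endpoint $\theta=n/m_2$ (when $m_2\geq 2n/(n-1)$) a straight sum of $\sup_x|K_k|$ can pick up a logarithm. The paper avoids this by going back to the pointwise kernel: for fixed $x$ there is a single $k_0$ with $|x|\sim|t|2^{k_0m_2}$; only $O(1)$ values of $k$ near $k_0$ receive the stationary bound, while for $|k-k_0|\gg 1$ the non-stationary bound $|t|^{-q}2^{k(n-m_2q)}$ (any $q$) makes the remaining sum harmless.
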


\begin{remark} \rm
If $m_1=\alpha_1$ and (H1) and (H3) hold, one can easily verify that
for $k>0$,  \eqref{e3} and \eqref{e3-1} are equivalent to
\begin{equation*}
\norm{e^{it\phi(\sqrt{-\Delta})}\triangle_k u_0}_\infty \lesssim
|t|^{-\theta}2^{k(n-m_1\theta)}\norm{u_0}_1,\ 0\leq \theta \leq
\frac{n}{2}.
\end{equation*}
If $m_2=\alpha_2$ and (H2) and (H4) hold,  one can easily verify
that for $k\le 0$, \eqref{e4} and \eqref{e4-1} are equivalent to
\begin{equation*}
\norm{e^{it\phi(\sqrt{-\Delta})}\triangle_k u_0}_\infty \lesssim
|t|^{-\theta}2^{k(n-m_2\theta)}\norm{u_0}_1,\ 0\leq \theta \leq
\frac{n}{2}.
\end{equation*}

\end{remark}

Throughout this paper, $C>1$ and $c<1$ will denote positive
universal constants, which can be different at different places.
$A\lesssim B$ means that $A\leq CB$,  and $A \sim B$ stands for
$A\lesssim B$ and $B\lesssim A$. We denote by $p'$ the dual number
of $p \in [1,\infty]$, i.e., $1/p+1/p'=1$. We will use Lebesgue
spaces $L^p:=L^p(\mathbb{R}^n)$, $\|\cdot\|_p :=\|\cdot\|_{L^p}$,
Sobolev spaces $H^{s}_p=(I-\Delta)^{-s/2}L^p$, $H^s:=H^s_2$.   Let
$1\le p,q \le \infty. $ Besov spaces are defined in the following
way:
$$B^s_{p, q}=\bigg \{f \in {\cal S}'({\Bbb R}^n):
\| f \|_{B^s_{p, q}}:=\bigg(\|P_{\leq
0}f\|^q_{L^p}+\sum^\infty_{k=1} 2^{k s q}\|\triangle_k f
\|^q_{L^p}\bigg)^{1/q} <\infty\bigg \}.
$$
Some properties of these function spaces can be found in
\cite{BL,Tr}.

 The rest of this paper is organized as follows. In Section 2, we
prove Theorem \ref{t1}. In Section 3, we derive Strichartz estimate
in a general setting. Some applications will be given in Sections 4
and 5.

\section{Decay Estimate } \label{decay}
In this section we will prove Theorem \ref{t1}. The proof for the
case $n=1$ is direct and simple, but reflects the idea for the
higher dimension.
\begin{proof}[\textbf{Proof of Theorem \ref{t1}}]  Since the proof in the
case $n=1$ is slightly different from the case of higher spatial
dimensions, we divide the proof into the following two steps.

\textbf{Step 1.} We consider the case $n=1$.  First, we prove (a).
It follows from Young's inequality that
\begin{eqnarray*}
\norm{e^{it\phi(\sqrt{-\Delta})}\triangle_k u_0}_\infty  &\lesssim&
\norm{\mathscr{F}^{-1}e^{it\phi(|\xi|)}\psi(2^{-k}|\xi|)}_\infty
\norm{u_0}_1\\
&\lesssim& \norm{I_k }_{\infty}\norm{u_0}_1,
\end{eqnarray*}
where we assume that
\begin{eqnarray} \label{e6-1}
I_k(x)=\mathscr{F}^{-1}(e^{it\phi(|\xi|)}\psi(2^{-k}|\xi|))(2^{-k}x)=2^k\int
e^{ix\xi}e^{it\phi(2^k\xi)}\psi(\xi)d\xi.
\end{eqnarray}
We immediately get that
\begin{eqnarray}\label{eq:e21}
\norm{I_k}_{\infty}\lesssim 2^k,
\end{eqnarray}
which is the result of \eqref{e3}, as desired. Now we assume (H3)
holds. Let $\phi_1(\xi)=x\xi+t\phi(2^k\xi)$, then
$|\phi_1''(\xi)|>|t|2^{k\alpha_1}$ on the support of $\psi$. Thus by
van der Corput's Lemma (see \cite{Stein}) we can get
\begin{eqnarray} \label{eq:e21-1}
\norm{I_k}_{\infty}\lesssim |t|^{-\half 1}2^{k(1-\half{\alpha_1})}.
\end{eqnarray}
By an interpolation between  \eqref{eq:e21} and \eqref{eq:e21-1}, we
get for $0\leq \theta \leq 1$,
\begin{eqnarray*}
\norm{I_k}_{\infty}\lesssim |t|^{-\half
\theta}2^{k(1-\half{\theta\alpha_1})},
\end{eqnarray*}
which completes the proof of (a) in the case $n=1$.

The proof of (b) is similar to (a) and we omit the details. Now we
turn to the proof of (c). First, we consider the case $m_2<2$. Fix
$0\leq \theta \leq \min(\rev {m_2},\rev 2)=\rev {2}$. Since
$\theta<\frac{1}{m_2}$,  it follows from (b) that
\begin{eqnarray}\label{e16-a}
\norm{e^{it\phi(\sqrt{-\Delta})}P_{\leq0} u_0}_\infty &\lesssim&
\sum_{k<0}|t|^{-\theta}2^{k(1-\theta m_2)}\norm{u_0}_1
\lesssim|t|^{-\theta}\norm{u_0}_1.
\end{eqnarray}
Taking $\theta=0$ or $\theta= \min(\rev {m_2},\rev 2)$, we get the
result.

Next, we consider the case $m_2\geq 2$. One easily sees that
\eqref{e16-a} holds also in the case $m_2\geq 2$ and $\theta=0$. So,
it suffices to consider the case $m_2\geq 2$ and $\theta=\min(\rev
{m_2},\rev 2)=\rev{m_2}$. By simple calculation, we get that for any
$ m\geq 0$,
\begin{equation}\label{e16}
\frac{d^m}{d{\xi^m}}\brk{\rev{\phi'(2^k\xi)}} \lesssim
2^{-k(m_2-1)}, \quad \xi\in {\rm supp}\psi.
\end{equation}
Thus,  if $|x|\leq 1$, then
$|\partial_{\xi}^m(e^{ix\xi}\psi(\xi))|\lesssim 1$, and integrating
by part we can get that for any $q \geq 0$,
\[
|I_k(x)|\lesssim |t|^{-q}2^{k(1-m_2q)}.
\]
If $|x|>1$, let $k_0$ be the smallest integer such that $|x|\leq
|t|2^{k_0m_2}$, then $|x|\approx |t|2^{k_0m_2}$. For $|k-k_0|>C\gg
1$, one has that $|\phi_1'(\xi)|\geq c|t|2^{km_2}$, integrating by
part we can get that for any $\ q \geq 0$ (see \eqref{e20} below),
\[
|I_k(x)|\lesssim |t|^{-q}2^{k(1-m_2q)}.
\]
For $|k-k_0|\leq C$, noticing that $|x|>1$ and $m_2\ge 2$,  we have
\begin{eqnarray*}
|I_k(x)|&\lesssim& |t|^{-\half
1}2^{k(1-\half{m_2})}\lesssim|t|^{-\half
1}\brk{\frac{|x|}{|t|}}^{(1-\half{m_2})\rev{m_2}} \lesssim
|t|^{-\rev{m_2}}.
\end{eqnarray*}
Therefore, taking $q$ sufficiently large, we have
\begin{eqnarray*}
|\sum_{k\le 0} I_{k}(x)| &\lesssim& \sum_{|k-k_0|\leq C} |I_k(x)|+ \sum_{|k-k_0|\geq C} |I_k(x)| \\
&\lesssim& \sum_{|k-k_0|\leq
C}|t|^{-\rev{m_2}}+\sum_{2^k<|t|^{-\rev{m_2}}}2^k+\sum_{2^k>|t|^{-\rev{m_2}}}|t|^{-q}2^{k(1-m_2q)}\\
&\les&|t|^{-\rev{m_2}},
\end{eqnarray*}
which completes the proof of (c).

\textbf{Step 2.} We consider the case $n\geq 2$. Our idea is as
follows: First, we reduce the problem to an oscillatory integral in
one dimension relating the Bessel function by changing to polar
coordinates; Next, we divide the discussion into two cases: in one
case we use the vanishing property at the origin and the recurring
property for the Bessel function, and in another case we use the
decay property of the Bessel function. We denote by $J_m(r)$ the
Bessel function:
\begin{eqnarray*}
J_m(r)=\frac{(r/2)^m}{\Gamma(m+1/2)\pi^{1/2}}
\int_{-1}^1e^{irt}(1-t^2)^{m-1/2}dt, \ \ m>-1/2.
\end{eqnarray*}
 We first list some properties
of $J_m(r)$ in the following lemma. For their proof we refer the
readers to \cite{Stein}, \cite{GRA}.
\begin{lemma}[Properties of the Bessel function]\label{l1}
 We have for $0<r<\infty$ and $m>-\half 1$\\
{\rm (i)} $J_m(r)\leq Cr^m$,\\
{\rm (ii)} $\frac{d}{dr}(r^{-m}J_m(r))=-r^{-m}J_{m+1}(r)$,\\
{\rm (iii)} $J_m(r)\leq Cr^{-\half 1}$.
\end{lemma}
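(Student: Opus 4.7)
The plan is to derive all three properties directly from the integral representation
\[
J_m(r) = \frac{(r/2)^m}{\Gamma(m+1/2)\pi^{1/2}} \int_{-1}^{1} e^{irt}(1-t^2)^{m-1/2}\,dt, \qquad m > -1/2.
\]
For (i) I would simply pull absolute values inside the integral: the prefactor is $(r/2)^m$ up to a constant, and the remaining $t$-integral reduces to a Beta value $B(1/2,\,m+1/2)$, which is finite precisely because $m > -1/2$. This gives $J_m(r) \le C_m r^m$ uniformly for $0 < r < \infty$.

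For (ii), after writing $r^{-m}J_m(r) = c_m \int_{-1}^{1} e^{irt}(1-t^2)^{m-1/2}\,dt$ with $c_m = 2^{-m}/[\Gamma(m+1/2)\pi^{1/2}]$, I would differentiate under the integral sign to produce $c_m \int_{-1}^{1} it\, e^{irt}(1-t^2)^{m-1/2}\,dt$, and then invoke the identity
\[
t(1-t^2)^{m-1/2} = -\frac{1}{2m+1}\frac{d}{dt}(1-t^2)^{m+1/2}.
\]
Integration by parts is legitimate because $(1-t^2)^{m+1/2}$ vanishes at $t = \pm 1$ for $m > -1/2$, and after reconciling constants via $\Gamma(m+3/2) = (m+1/2)\Gamma(m+1/2)$ the resulting integral is exactly the representation of $-r^{-m} J_{m+1}(r)$.

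For (iii), the content is the large-$r$ regime, since for $r \le 1$ the bound $r^{-1/2}$ is a weaker consequence of (i) (after using (ii) to shift to a larger index if $m$ is close to $-1/2$). For $r \ge 1$ I would analyze the oscillatory integral $\int_{-1}^{1} e^{irt}(1-t^2)^{m-1/2}\,dt$ directly. The linear phase $rt$ has no interior stationary point, so the leading contributions are concentrated at the endpoints $t = \pm 1$; a standard endpoint analysis (substitution $u = r(1 \mp t)$ followed by integration by parts against $e^{irt}$, or equivalently the classical expansion $J_m(r) \sim \sqrt{2/(\pi r)}\cos(r - m\pi/2 - \pi/4)$) produces a decay of order $r^{-(m+1/2)}$ for the integral, which multiplied by the prefactor $(r/2)^m$ yields $|J_m(r)| \lesssim r^{-1/2}$ uniformly for $r \ge 1$.

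The main obstacle is making (iii) uniform in both $r$ and $m > -1/2$, because for $m$ near $-1/2$ the endpoint singularity $(1\mp t)^{m-1/2}$ is too strong for naive integration by parts. My preferred tactic is to iterate the recurrence in (ii) to reduce to an index large enough that the amplitude vanishes at $t = \pm 1$, carry out the endpoint stationary-phase analysis there, and then interpolate with (i) via a smooth cutoff at $r = 1$ to recover the uniform bound across the whole range $0 < r < \infty$.
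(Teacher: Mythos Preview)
Your argument is correct; each of (i)--(iii) can be extracted from the integral representation exactly as you outline, and the constants reconcile in (ii) as you say. Note, however, that the paper does not actually prove this lemma: it simply writes ``For their proof we refer the readers to \cite{Stein}, \cite{GRA}.'' So there is no paper proof to compare against --- your proposal supplies the standard textbook arguments that those references contain.

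One remark on your last paragraph: your concern about uniformity in $m$ is unnecessary here. The lemma is only invoked in the paper for the fixed index $m=\frac{n-2}{2}$ (and, through the recurrence (ii), for $m=\frac{n}{2}$ and a few further shifts), so the constant $C$ is allowed to depend on $m$. Your small-$r$ reduction is also simpler than you indicate: for $0<r\le 1$ and any $m>-1/2$ one has $r^m\le r^{-1/2}$ directly, so (i) already yields (iii) on $(0,1]$ without any index shift.
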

It is well known that the Fourier transform of a radial function $f$
is still radial and (cf. \cite{Stein2})
\begin{eqnarray}\label{e8}
\hat{f} (\xi)= 2\pi  \int_{0}^{\infty} f(r) r^{n-1}(r
|\xi|)^{-\half{n-2}}J_{\frac{n-2}{2}}(r|\xi|)dr,
\end{eqnarray}
From (i) and (ii) of Lemma \ref{l1}, we can easily get that for
$0\leq s \leq 2$ and for any $\ k\geq 0$,
\begin{equation}\label{e15}
\left |\frac{\partial^k}{\partial
r^k}\brk{\psi(r)r^{n-1}(rs)^{-(n-2)/2}J_{\frac{n-2}{2}}(rs)}\right
|\leq c_k.
\end{equation}
If $m=-\frac{n-2}{2}$, $J_m(r)$ is connected to the Fourier
transform of the spherical surface measure. It is known that (see
\cite{John}, Ch. 1, Equation (1.5)),
\begin{equation}\label{e17}
r^{-\frac{n-2}{2}}J_{\frac{n-2}{2}}(r)=c_n\mathscr{R}(e^{ir}h(r)),
\end{equation}
where $h$ satisfies
\begin{equation}\label{e12}
|\partial_r^kh(r)|\leq c_k (1+r)^{-\frac{n-1}{2}-k}.
\end{equation}
From (\ref{e12}), we get that, for $s>2$ and for any $k\geq 0$,
\begin{equation}\label{e18}
\left |\frac{\partial^k}{\partial
r^k}\brk{\psi(r)r^{n-1}h(rs)}\right |\leq c_k s^{-\frac{n-1}{2}}.
\end{equation}

We now show the proof of (a). It follows from Young's inequality
that
\begin{eqnarray*}\label{e6}
\norm{e^{it\phi(\sqrt{-\Delta})}\triangle_k
u_0}_\infty&=&\norm{\mathscr{F}^{-1}e^{it\phi(|\xi|)}\psi(2^{-k}|\xi|)\mathscr{F}u_0}_\infty
\\
&\lesssim&
\norm{\mathscr{F}^{-1}e^{it\phi(|\xi|)}\psi(2^{-k}|\xi|)}_\infty
\norm{u_0}_1.
\end{eqnarray*}
In view of \eqref{e8} we have
\begin{align*}\label{e6}
\mathscr{F}^{-1}(e^{it\phi(|\xi|)}\psi(2^{-k}|\xi|))(x)=& 2^{kn}\mathscr{F}^{-1}(e^{it\phi(|2^k\xi|)}\psi(|\xi|))(2^k|x|)\\
=&2^{kn}\int_{0}^{\infty}e^{it\phi(2^kr)}\psi(r)r^{n-1}(r2^ks)^{-\half{n-2}}J_{\frac{n-2}{2}}(r2^ks)dr\\
:= & II_k(2^ks),
\end{align*}
where $s=|x|$. It suffices to show
\begin{eqnarray*}
\norm{II_k(s)}_\infty \les |t|^{-\theta}2^{k(n-m_1\theta)}.
\end{eqnarray*}
 From (i) of Lemma \ref{l1}, we
obtain the trivial estimate for $\theta=0$,
\begin{equation}\label{e7}
\norm{II_k(s)}_\infty\lesssim 2^{kn}.
\end{equation}
We will discuss it in following two cases.

\textbf{Case 1.} $s\leq 2$. In this case, we will use the vanishing
property of the Bessel function at the origin. Denote
$D_r=\rev{it\phi'(2^kr)2^k}\frac{d}{dr}$. We see that
$$
D_r(e^{it\phi(2^kr)})=e^{it\phi(2^kr)},\
(D_r)^*f=-\rev{it2^k}\frac{d}{dr}\left(\rev{\phi'(2^kr)}f\right).
$$
From (H1), we get that for any $m\geq 0$ and $r\sim 1$,
\begin{equation}\label{e16}
\frac{d^m}{dr^m}\brk{\rev{\phi'(2^kr)}}\leq c_m 2^{-k(m_1-1)}.
\end{equation}
Let $\tilde{\psi}(r)=\psi(r)r^{n-1}$.  Using integration by part, we
have for any $q\in \Z^{+}$,
\begin{eqnarray}
II_k(s)&=&2^{kn}\int_{0}^{\infty}e^{it\phi(2^kr)}\tilde{\psi}(r)(rs)^{-\frac{n-2}{2}}J_{\frac{n-2}{2}}(rs)dr \nonumber\\
&=&2^{kn}\int_{0}^{\infty}D_r(e^{it\phi(2^kr)})\tilde{\psi}(r)(rs)^{-\frac{n-2}{2}}J_{\frac{n-2}{2}}(rs)dr \nonumber\\
&=&-\frac{2^{kn}}{it2^k}\int_{0}^{\infty}e^{it\phi(2^kr)}\frac{d}{dr}
\brk{\rev{\phi'(2^kr)}\tilde{\psi}(r)(rs)^{-\frac{n-2}{2}}J_{\frac{n-2}{2}}(rs)}dr \nonumber\\
&=&\frac{2^{kn}}{(it2^k)^q}\sum_{m=0}^q \sum_{l_1,\ldots l_q \in
\Lambda_m^q}C_{q,m} \nonumber\\
& &\cdot\int_{0}^{\infty}e^{it\phi(2^kr)}
\prod_{j=1}^{q}\partial_r^{l_j}\brk{\rev{\phi'(2^kr)}}
\partial_r^{q-m}\brk{\tilde{\psi}(r)(rs)^{-\frac{n-2}{2}}J_{\frac{n-2}{2}}(rs)}dr,
\label{e20}
\end{eqnarray}
where $\Lambda_m^q=\{l_1,\ldots,l_q \in \Z^{+}: 0\leq l_1<\ldots
<l_q\leq q, l_1+\ldots l_q=m\}$. It follows from (\ref{e15}),
(\ref{e16}) and \eqref{e20} that, for any $q\in \Z^{+}$,
\begin{align}
|II_k(s)|\lesssim |t|^{-q}2^{k(n-m_1q)}. \label{e22}
\end{align}
Interpolating \eqref{e22} with (\ref{e7}), we get that for any
$\theta \geq 0$, $|I_k(s)|\lesssim |t|^{-\theta}2^{k(n-m_1\theta)},
$ which completes the proof of (a) in this case.

\textbf{Case 2.} $s\geq 2$. In this case, we will use the decay
property of Bessel function. It follows from (\ref{e17}) that
\begin{eqnarray*}
II_k(s)&=&c_n2^{kn}\int_{0}^{\infty}e^{it\phi(2^kr)}\tilde{\psi}(r)(e^{irs}h(rs)+e^{-irs}\bar{h}(rs))dr\\
&=&c_n2^{kn}\int_{0}^{\infty}e^{i(t\phi(2^kr)+rs)}\tilde{\psi}(r)h(rs)dr+c_n2^{kn}\int_{0}^{\infty}e^{i(t\phi(2^kr)-rs)}\tilde{\psi}(r)\bar{h}(rs)dr \\
&:= &B_1+B_2.
\end{eqnarray*}
Without loss of generality, we can assume that $t>0$ and
$\phi'(r)>0$. For $B_1$, let $\phi_1(r)=t\phi(2^kr)+rs$. Note that
$\phi_1'(r)=t2^k\phi'(2^kr)+s\geq c t 2^{km_1}$, and (\ref{e16})
also holds if we replace $\phi$ by $\phi_1$. Noticing (\ref{e18}),
analogous to Case 1 we can get that for any $\theta \geq 0$,
\begin{eqnarray*}
|B_1|\lesssim |t|^{-\theta}2^{k(n-m_1\theta)}.
\end{eqnarray*}
For $B_2$, let $\phi_2(r)=t\phi(2^kr)-rs$. We note that if
$s=t2^k\phi'(2^kr)$, then $\phi_2'(r)=0$. We divide the discussion
into the following two cases.

\textbf{Case 2a.} $s>2\sup_{r \in [1/2,2]}t2^k\phi'(2^kr)$ or
$s<\half 1\inf_{r \in [1/2,2]}t2^k\phi'(2^kr)$. In this case, we see
that $|\phi_2'(r)|\geq c t2^{km_1}$ if $r\sim 1$, and (\ref{e16})
also holds if one replaces $\phi$ by  $\phi_2$. By (\ref{e18}), we
can get that for any $\theta \geq 0$,
\begin{eqnarray*}
|B_2|\lesssim |t|^{-\theta}2^{k(n-m_1\theta)}.
\end{eqnarray*}

\textbf{Case 2b.} $\half 1\inf_{r \in [1/2,2]}t2^k\phi'(2^kr)\leq
s\leq 2\sup_{r \in [1/2,2]}t2^k\phi'(2^kr)$. It follows from
(\ref{e18}) that
\begin{eqnarray}
|B_2|\lesssim 2^{kn} s^{-\frac{n-1}{2}}\lesssim
t^{-\frac{n-1}{2}}2^{k(n-\half{(n-1)m_1})}. \label{e21}
\end{eqnarray}
Interpolating \eqref{e21} with (\ref{e7}), we get that for $0\leq
\theta \leq \frac{n-1}{2}$,
\begin{equation}\label{e19}
|B_2|\les t^{-\theta}2^{k(n-m_1\theta)}.
\end{equation}

If  (H3) holds in addition, then $|\phi_2''(r)|\geq t2^{k\alpha_1}.$
It follows from van der Corput's Lemma that
\begin{eqnarray}
|B_2|&\lesssim& (t2^{k\alpha_1})^{-1/2}\int_0^\infty
|\frac{d}{dr}(\tilde{\psi(r)}h(rs))|dr \lesssim
t^{-n/2}2^{k(n-\half{n}(m_1+\frac{\alpha_1-m_1}{n}))}. \label{e24}
\end{eqnarray}
Therefore, interpolating \eqref{e24} with (\ref{e19}) and using the
fact that for $0\leq \theta \leq 1$,
$\half{n-1+\theta}=(1-\theta)\half{n-1}+\theta\half{n}$, we get
\begin{eqnarray*}
|II_k(s)|\lesssim
|t|^{-\half{n-1+\theta}}2^{k(n-\half{m_1(n-1+\theta)}-\half{\theta(\alpha_1-m_1)})},
\quad 0\leq \theta \leq 1,
\end{eqnarray*}
which completes the proof of (a).

The proof of (b) is similar to that of (a) and we omit the details.
Now we turn to proof of (c). Fix $0\leq \theta \leq
\min(\frac{n}{m_2},\half{n-1})$, If $\theta <\frac{n}{m_2}$, then
$n-m_2\theta>0$. From (b), we have
\begin{eqnarray*}
\norm{e^{it\phi(\sqrt{-\Delta})}P_{\leq0} u_0}_\infty &\lesssim&
\sum_{k=-\infty}^2 \norm{e^{it\phi(\sqrt{-\Delta})}\triangle_k
P_{\leq0}
u_0}_\infty\\
&\lesssim&\sum_{k=-\infty}^2
|t|^{-\theta}2^{k(n-m_2\theta)}\norm{P_{\leq0}
u_0}_1\\
&\lesssim& |t|^{-\theta}\norm{P_{\leq0} u_0}_1.
\end{eqnarray*}
Now we assume $\half{n-1}\geq \frac{n}{m_2}$ and $
\theta=\frac{n}{m_2}$ in the following discussion. From the proof of
(b), we know that, if $k_0<0$ and $s\sim t2^{k_0m_2}\geq 2$, then
\begin{eqnarray*}
|II_{k_0}(s)|&\lesssim& t^{-\half{n-1}}2^{k_0(n-\half{(n-1)m_2})}\\
&\lesssim&t^{-\half{n-1}}(\frac{s}{t})^{(n-\half{(n-1)m_2})\rev{m_2}}\\
&\lesssim&t^{-\frac{n}{m_2}}.
\end{eqnarray*}
If $|k-k_0|>C\gg 1$, then
\begin{eqnarray*}
|II_k(s)|\lesssim t^{-\alpha}2^{k(n-m_2\alpha)},\ \forall\ \alpha \
\geq 0.
\end{eqnarray*}
Therefore,  choosing $\alpha$ large, we have
\begin{eqnarray*}
|II_{\leq0}(s)|&\lesssim& \sum_{|k-k_0|\leq C}|II_{k}(s)|+\sum_{|k-k_0|>C}|II_k(s)|\\
&\lesssim&t^{-\frac{n}{m_2}}+\sum_{2^k<t^{-\rev{m_2}}}2^{kn}+\sum_{2^k>t^{-\rev{m_2}}}t^{-\alpha}2^{k(n-m_2\alpha)}\\
&\lesssim&t^{-\frac{n}{m_2}}.
\end{eqnarray*}
If in addition (H4) holds, the proof is similar. We omit the
details.
\end{proof}
\begin{remark} \rm
It's easy to see that in the case $n=1$ we did not use the
properties that $\phi$ is even. Our method is also adapted to more
general radial $\phi$. But it seems difficult to apply for
non-radial $\phi$.
\end{remark}

\section{Strichartz Estimate}

In this section, we show the Strichartz estimate by using the decay
estimates obtained in Section \ref{decay}. We will work in an
general setting in this section and apply it to some concrete
equation in the next section. Our method is using duality argument.
We mention that this argument is quite standard. We will omit most
of the proof, and refer the reader to \cite{KT} for details. Here we
use an argument in \cite{Wang}. Since the decay rate is different
between $|t|>1$ and $|t|\leq 1$, we will need a variant
Hardy-Littlewood-Sobolev inequality.
\begin{lemma}\label{l2}
Assume $\gamma_1, \gamma_2\in \R$, let
\begin{eqnarray*}
k(y)= \left \{
\begin{array}{l}
|y|^{-\gamma_1},\quad |y|\leq1,\\
|y|^{-\gamma_2}, \quad |y|>1,
\end{array}
\right.
\end{eqnarray*}
Assume that one of the following
conditions holds,\\
(a) $0<\gamma_1=\gamma_2<n$, $1<p<q<\infty$ and $1-\rev p+\rev q=\frac{\gamma_1}{n}$,\\
(b) $\gamma_1<\gamma_2$, $0<\gamma_1<n$, $1<p<q<\infty$ and $1-\rev p+\rev q=\frac{\gamma_1}{n}$,\\
(c) $\gamma_1<\gamma_2$, $0<\gamma_2<n$, $1<p<q<\infty$ and $1-\rev p+\rev q=\frac{\gamma_2}{n}$,\\
(d) $\gamma_1<\gamma_2$, $1\leq p\leq q\leq\infty$ and
$\frac{\gamma_1}{n}<1-\rev p+\rev q<\frac{\gamma_2}{n}$.\\
We have
$$\norm{f*k}_q\les \norm{f}_p.$$
\end{lemma}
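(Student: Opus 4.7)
The plan is to split the kernel into its local and tail pieces,
\[
k_1(y) = |y|^{-\gamma_1} \chi_{\{|y| \leq 1\}}, \qquad k_2(y) = |y|^{-\gamma_2} \chi_{\{|y| > 1\}},
\]
so that $k = k_1 + k_2$, and to estimate $f * k_1$ and $f * k_2$ separately. The key observation is that $k_1$ belongs to $L^r$ precisely when $r < n/\gamma_1$ and, more generally, $k_1 \in L^{n/\gamma_1,\infty}$; dually $k_2 \in L^r$ precisely when $r > n/\gamma_2$, and $k_2 \in L^{n/\gamma_2,\infty}$. Setting $1/r = 1 - 1/p + 1/q$ as mandated by scaling in Young's inequality, each convolution will then be estimated either by Young's inequality (whenever the corresponding kernel piece lies in $L^r$) or by the weak-type Hardy--Littlewood--Sobolev inequality (when $r$ equals the endpoint $n/\gamma_i$).

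Running through the four cases: in (a), $k(y) \lesssim |y|^{-\gamma_1}$ globally, so $k \in L^{n/\gamma_1,\infty}$ and classical HLS applied directly to $f * k$ gives the conclusion under $1/r = \gamma_1/n$ and $1 < p < q < \infty$. In (b), the hypothesis $1 - 1/p + 1/q = \gamma_1/n$ forces $r = n/\gamma_1$; I would apply weak-type HLS to $f * k_1$, and observe that $\gamma_1 < \gamma_2$ yields $r = n/\gamma_1 > n/\gamma_2$, so $k_2 \in L^r$ and Young's inequality controls $f * k_2$. Case (c) is symmetric: now $r = n/\gamma_2 < n/\gamma_1$, so weak-type HLS handles $f * k_2$ while Young handles $f * k_1$, since $k_1 \in L^r$.

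Case (d) is the most permissive: the strict inequalities $\gamma_1/n < 1/r < \gamma_2/n$ guarantee that both $k_1$ and $k_2$ lie in $L^r$ simultaneously, so two applications of Young's inequality (valid on the full range $1 \leq p \leq q \leq \infty$) suffice. In every case, summing the two contributions gives $\|f * k\|_q \lesssim \|f\|_p$. The only subtle point I would verify carefully is the invocation of weak-type HLS at the sharp endpoints in (b) and (c); the exclusion of the trivial cases $p = 1$ and $q = \infty$ in those hypotheses is precisely what is needed to avoid the well-known failure of strong-type HLS at the boundary.
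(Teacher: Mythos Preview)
Your proposal is correct and follows essentially the same approach as the paper: split the kernel according to $|y|\le 1$ and $|y|>1$, then apply Hardy--Littlewood--Sobolev for the endpoint pieces and Young's inequality for the pieces that land in $L^r$. The paper's own proof is just the one-line sketch of exactly this strategy, so your write-up simply supplies the details the authors omitted.
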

\begin{proof} By splitting $\mathbb{R}^n$ into $|y|\ge 1$ and $|y|\le 1$, we can easily get the
results by following Hardy-Littlewood-Sobolev's and  Young's
inequalities.
\end{proof}

\begin{definition} Given $\theta_1\leq \theta_2$, we say $q$ belongs to
$E(\theta_1,\theta_2)$ if one of the following holds: \\
(a) $0<\theta_1=\theta_2<1$ and $q=\frac{2}{\theta_1}$,\\
(b) $\theta_1<\theta_2$, $0<\theta_1<1$ and $q=\frac{2}{\theta_1}$,\\
(c) $\theta_1<\theta_2$, $0<\theta_2<1$ and $q=\frac{2}{\theta_2}$,\\
(d) $\theta_1<\theta_2$, $2\leq q\leq\infty$ and
$\theta_1<\frac{2}{q}<\theta_2$.
\end{definition}

\quad We now give the Strichartz estimate. Denote
\[U(t)=e^{it\phi(\sqrt{-\Delta})},\quad \A
f=\int_0^tU(t-\tau)f(\tau,\cdot)d\tau.\] We assume that, for $2\leq
p\leq \infty$, $\alpha:=\alpha(p)\in \R$, and $\theta_1\leq
\theta_2$,
\begin{equation}\label{eq:e20}
\norm{U(t)f}_{B_{p,2}^{\alpha}}\les k(t)\norm{f}_{B_{p',2}^0},
\end{equation}
where
\begin{eqnarray*}
k(t)= \left \{
\begin{array}{l}
|t|^{-\theta_1},\quad |t|\leq1,\\
|t|^{-\theta_2}, \quad |t|>1.
\end{array}
\right.
\end{eqnarray*}
Using Lemma \ref{l2} and standard duality argument, we can prove the
following proposition. We omit its proof and refer the reader to see
\cite{KT}, \cite{Wang}.
\begin{proposition}\label{p1}
Assume $U(t)$ satisfies \eqref{eq:e20}, then we have for $q \in
E(\theta_1,\theta_2)$, $\eta \in \R$, and $T>0$,
\begin{eqnarray*}
\norm{U(t)h}_{L^q(-T,T;B_{p,2}^{\eta+\half \alpha})}&\les&
\norm{h}_{H^\eta},\\
\norm{\A f}_{L^q(-T,T;B_{p,2}^{\eta+\alpha})}&\les&
\norm{f}_{L^{q'}(-T,T;B_{p',2}^\eta)},\\
\norm{\A f}_{L^{\infty}(-T,T;H^{\eta+\half \alpha})}&\les&
\norm{f}_{L^{q'}(-T,T;B_{p',2}^\eta)},\\
\norm{\A f}_{L^q(-T,T;B_{p,2}^{\eta+\half \alpha})}&\les&
\norm{f}_{L^1(-T,T;H^\eta)}.
\end{eqnarray*}
\end{proposition}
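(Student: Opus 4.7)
The plan is to follow the standard $TT^*$/duality scheme of Keel--Tao and Wang, adapted to the two-scale kernel $k(t)$ and to the Besov setting. Set $Th := U(t)h$, so that the first inequality is, by the $L^2$-unitarity of $U(t)$, equivalent via $TT^*$ to
\begin{equation*}
\Big\| \int U(t-s)F(s)\, ds \Big\|_{L^q_t B^{\eta+\alpha}_{p,2}} \lesssim \|F\|_{L^{q'}_t B^{\eta}_{p',2}}.
\end{equation*}
Inside the norm apply the hypothesis \eqref{eq:e20} to get $\|U(t-s)F(s)\|_{B^{\eta+\alpha}_{p,2}} \lesssim k(t-s)\|F(s)\|_{B^{\eta}_{p',2}}$, and then Minkowski in the Besov sum reduces matters to the scalar kernel bound $\|k*g\|_{L^q_t}\lesssim \|g\|_{L^{q'}_t}$ with $g(s)=\|F(s)\|_{B^{\eta}_{p',2}}$. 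This is precisely Lemma \ref{l2} in one (time) dimension with $\gamma_1 = \theta_1$, $\gamma_2 = \theta_2$: the HLS condition $1-1/q'+1/q=\theta_j$ collapses to $2/q=\theta_j$ in cases (a)--(c), and case (d) captures the non-sharp range. The admissibility set $E(\theta_1,\theta_2)$ has been defined exactly so that the four subcases match, yielding the first inequality.

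The retarded estimate for $\A$ then follows from the Christ--Kiselev lemma, which transfers the non-retarded bound to $\int_0^t U(t-s)f(s)\,ds$ at the cost of a strict inequality between the time exponents; since $\theta_1,\theta_2<1$ in all admissible subcases, one has $q>q'$ automatically and Christ--Kiselev applies, while the truly endpoint situation reduces to the bilinear interpolation argument of Keel--Tao. This gives the second inequality. The third, the inhomogeneous $L^\infty_t H^{\eta+\alpha/2}$ bound, is obtained by pairing $\A f$ with an $H^{-(\eta+\alpha/2)}$ test function, writing $\A f(t) = U(t)\int_0^t U(-s)^*f(s)\,ds$, and invoking unitarity on $L^2$ together with the dual form of the homogeneous estimate already proved. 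The fourth inequality is the simplest: Minkowski in time gives $\|\A f(t)\|_{B^{\eta+\alpha/2}_{p,2}} \le \int_0^t \|U(t-s)f(s)\|_{B^{\eta+\alpha/2}_{p,2}} ds$, and then the homogeneous Strichartz estimate applied pointwise in $s$, followed by integration in $t$, finishes the bound.

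The main obstacle is that the two-scale decay of $k(t)$ forces us to replace the usual single-parameter Hardy--Littlewood--Sobolev inequality by Lemma \ref{l2}; the four cases defining $E(\theta_1,\theta_2)$ are exactly those produced by matching Young's inequality on $|t|>1$ against HLS on $|t|<1$, and bookkeeping which case is active is where the only real care is needed. A secondary technical point is the Christ--Kiselev passage from non-retarded to retarded, but in our setting the assumption $\theta_j<1$ in the sharp subcases makes the required strict inequality $q>q'$ automatic, and there is no genuinely new difficulty beyond those already present in the homogeneous setting treated in \cite{KT} and \cite{Wang}.
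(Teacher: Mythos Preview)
Your proposal is correct and follows precisely the approach the paper indicates: the paper itself omits the proof entirely, stating only ``Using Lemma~\ref{l2} and standard duality argument, we can prove the following proposition. We omit its proof and refer the reader to see \cite{KT}, \cite{Wang}.'' Your $TT^*$/duality sketch, reducing to the one-dimensional kernel bound of Lemma~\ref{l2} and then invoking Christ--Kiselev for the retarded estimate, is exactly this standard argument spelled out.
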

\begin{remark} \rm
The endpoint case $\theta_i=1$ also holds by following Keel and
Tao's ideas in \cite{KT}, but we will not pursue this issue in this
paper.
\end{remark}

\section{Application}
In this section we will apply Theorem \ref{t1} in Section 2 to some
concrete equations.  Our results below can cover some known results
so far, and make some improvements and provide simple proofs for the
Klein--Gordon equation and the Beam equation. A simple case is the
semi-group $e^{it(-\Delta)^\rho}$, $\rho>0$, we do not list its
estimates and one can get the desired estimates by using the same
way as in the following Klein-Gordon equation.

1 (Klein-Gordon equation). First, we consider the Klein-Gordon
equation,
\begin{eqnarray}\label{eq:klein}
\left \{
\begin{array}{l}
\partial_{tt}u-\Delta u+u=F,\\
u(0)=u_0(x),\ u_t(0)=u_1(x).
\end{array}
\right.
\end{eqnarray}
By Duhamel's principle, we get
\begin{eqnarray*}
u=K'(t)u_0+K(t)u_1-\int_0^tK(t-\tau)F(\tau)d\tau,
\end{eqnarray*}
where
\begin{eqnarray*}
K(t)=\omega^{-1} \sin(t\omega),\quad K'(t)=\cos(t\omega), \quad
\omega=\sqrt{I-\Delta}.
\end{eqnarray*}
This reduces to the semigroup $K_{\pm}(t):=e^{\pm
it(I-\Delta)^{1/2}}$, which corresponds to $\phi(r)=(1+r^2)^{1/2}$.
By simple calculation,
\[
\phi'(r)=r/(1+r^2)^{\half 1},\quad \phi''(r)=1/(1+r^2)^{\half 3},
\]
we see that $\phi$ satisfies (H1), (H2), (H3) and (H4) with $m_1=1$,
$\alpha_1=-1$, $m_2=\alpha_2=2$.
\begin{proposition}\label{p2}
Assume $2\leq p\leq \infty$, $1\leq q \leq \infty$, $\rev
p+\rev{p'}=1$, $\delta=\half 1-\rev p$.\\
{\rm (i)} Let $0\leq \theta\leq 1$, and $(n+1+\theta)\delta=
1+s'-s$, we have
\begin{eqnarray*}
\norm{K(t)g}_{B_{p,q}^s}\les \; |t|^{-(n-1+\theta)\delta}
\norm{g}_{B_{p',q}^{s'}}.
\end{eqnarray*}
{\rm (ii)} Let $0\leq \theta\leq n-1$, and $(n+1+\theta)\delta=
1+s'-s$, we have
\begin{eqnarray*}
\norm{K(t)g}_{B_{p,q}^s}\les \; |t|^{-(n-1-\theta)\delta}
\norm{g}_{B_{p',q}^{s'}}.
\end{eqnarray*}
{\rm (iii)} In particular, one has that for $\theta\in [0,1]$,
$(n+1+\theta)\delta\le 1+s'-s$,
\begin{eqnarray*}
\norm{K(t)g}_{B_{p,q}^s}\les k(t)\norm{g}_{B_{p',q}^{s'}}, \quad
k(t)= \left \{
\begin{array}{l}
|t|^{\min(1+s'-s-2n\delta,\
0)},\quad |t|\leq 1,\\
|t|^{-(n-1+\theta)\delta}, \quad |t|\geq 1.
\end{array}
\right.
\end{eqnarray*}
\end{proposition}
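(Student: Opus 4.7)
My plan is to decompose $g$ dyadically as $g = P_{\le 0} g + \sum_{k \ge 1} \triangle_k g$, estimate $K(t) = \omega^{-1}\sin(t\omega)$ on each block using Theorem \ref{t1} applied to the semigroup $e^{\pm it\phi(\sqrt{-\Delta})}$ with $\phi(r) = \sqrt{1+r^2}$ (so $m_1 = 1$, $\alpha_1 = -1$, $m_2 = \alpha_2 = 2$), and then reassemble the pieces in the Besov norm. Note that $\omega^{-1}$ acts as a Fourier multiplier of size $\sim 2^{-k}$ on the range of $\triangle_k$ (for $k \ge 1$) and of size $\sim 1$ on the range of $P_{\le 0}$, and is bounded on every $L^p$, $1 \le p \le \infty$, by Mikhlin or a direct kernel estimate.

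For (i), on each high-frequency block $\triangle_k$, $k \ge 1$, estimate \eqref{e3-1} with the above $(m_1,\alpha_1)$ gives
$$
\norm{e^{\pm it\phi(\sqrt{-\Delta})}\triangle_k g}_{\infty} \les |t|^{-\half{n-1+\theta}}\,2^{k\half{n+1+\theta}}\,\norm{\triangle_k g}_1, \qquad 0 \le \theta \le 1.
$$
Composing with $\omega^{-1}$ supplies the missing factor $2^{-k}$, and interpolating via Riesz--Thorin with the trivial $L^2$ bound $\norm{K(t)\triangle_k g}_2 \les 2^{-k}\norm{\triangle_k g}_2$ yields
$$
\norm{K(t)\triangle_k g}_p \les |t|^{-(n-1+\theta)\delta}\,2^{k((n+1+\theta)\delta - 1)}\,\norm{\triangle_k g}_{p'}.
$$
Under the hypothesis $(n+1+\theta)\delta = 1+s'-s$ the $k$-exponent becomes exactly $s'-s$, so multiplying by $2^{ks}$ and taking $\ell^q$ in $k\ge 1$ collapses the high-frequency contribution to $|t|^{-(n-1+\theta)\delta}\norm{g}_{B^{s'}_{p',q}}$.

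For the low-frequency block, since $m_2 = \alpha_2 = 2$, estimate \eqref{e6} applies with exponent $\min(n/m_2,n/2) = n/2$, giving $\norm{K(t)P_{\le 0} g}_\infty \les (1+|t|)^{-n/2}\norm{g}_1$ (using that $\omega^{-1}$ is bounded on the range of $P_{\le 0}$). Interpolation with the trivial $L^2$ bound produces $\norm{K(t)P_{\le 0}g}_p \les (1+|t|)^{-n\delta}\norm{g}_{p'}$, which is dominated by $|t|^{-(n-1+\theta)\delta}\norm{P_{\le 0}g}_{p'}$ for $|t| \ge 1$ since $n \ge n-1+\theta$ when $\theta \le 1$. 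Summing the high- and low-frequency contributions establishes (i).

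For (ii) the same machinery is run with \eqref{e3} replacing \eqref{e3-1}, applied with parameter $\theta' = (n-1-\theta)/2 \in [0,(n-1)/2]$: after interpolation the $k$-exponent again lands on $s'-s$ under the standing relation, while the time decay becomes $|t|^{-2\theta'\delta} = |t|^{-(n-1-\theta)\delta}$; the low-frequency piece is handled by \eqref{e5} with exponent $(n-1)/2$, which dominates the desired rate for $|t|\ge 1$. For (iii), the dispersive estimate blows up at $|t|=0$, so for $|t| \le 1$ each high-frequency block is instead controlled by the lossless multiplier-plus-Bernstein bound $\norm{K(t)\triangle_k g}_p \les 2^{k(2n\delta - 1)}\norm{\triangle_k g}_{p'}$; the surplus $1+s'-s-(n+1+\theta)\delta \ge 0$ assumed in (iii) lets us balance this Bernstein bound against the dispersive one level by level and splice with (i) at $|t|=1$ to recover the claimed $k(t)$. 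The main technical hurdle throughout is exponent bookkeeping -- in particular, arranging the two interpolations so that the $k$-exponent in the Besov sum collapses precisely to $s'-s$, and verifying that the low-frequency decay never degrades the high-frequency rate.
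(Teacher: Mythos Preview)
Your proposal is correct and follows essentially the same route as the paper: apply Theorem~\ref{t1} blockwise to $e^{\pm it\sqrt{1-\Delta}}$, interpolate each block with the trivial $L^2$ bound, absorb $\omega^{-1}$ as a $2^{-k}$ weight on high frequencies, and reassemble in the Besov norm. Two small points you leave implicit (the paper is equally terse): the low-frequency bound $(1+|t|)^{-n\delta}$ also covers $|t|\le 1$ since it is $O(1)$ there and the target rate blows up; and in (iii), when $1+s'-s-2n\delta<0$, the correct time exponent for $|t|\le 1$ is obtained by choosing the free parameter in \eqref{e3} so that the $k$-exponent collapses (equivalently, by invoking part (ii) with $\theta_2=(1+s'-s)/\delta-(n+1)$), not by interpolating against the fixed-$\theta$ estimate \eqref{e3-1} from (i).
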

\begin{proof} First, we show the results of (i).
It follows from (c) of Theorem \ref{t1} and Plancherel's identity
that,
\begin{align*}
\norm{K_{+}(t)P_{\leq 0}u_0}_\infty & \lesssim
|t|^{-(n-1+\theta)/2}\norm{P_{\leq 0}u_0}_{1}, \\
\norm{K_{+}(t)P_{\leq 0}u_0}_2 & \lesssim  \norm{P_{\leq 0}u_0}_{2}.
\end{align*}
From \eqref{e3-1} of Theorem \ref{t1} and  Plancherel's identity, we
can get for $k>0$,
\begin{align*}
\norm{K_{+}(t)\triangle_ku_0}_\infty & \lesssim
|t|^{-\half{n-1+\theta}}2^{k\half{n+1+\theta}}\norm{\triangle_ku_0}_1,\\
\norm{K_{+}(t)\triangle_k u_0}_2 & \lesssim  \norm{ \triangle_k
u_0}_{2}.
\end{align*}
Thus by Riesz-Thorin theorem,
\begin{align*}
\norm{K_{+}(t)P_{\leq 0}u_0}_p & \lesssim
|t|^{-(n-1+\theta)\delta}\norm{P_{\leq 0}u_0}_{p'}, \\
\norm{K_{+}(t)\triangle_ku_0}_p &\lesssim
|t|^{-(n-1+\theta)\delta}2^{k(n+1+\theta)\delta}\norm{\triangle_ku_0}_{p'}.
\end{align*}
Therefore, it follows from $(n+1+\theta)\delta = 1+s'-s$ that
\begin{eqnarray*}
\norm{K(t)u_0}_{B_{p,q}^{s}}\lesssim |t|^{-(n-1+\theta)\delta}
\norm{u_0}_{B_{p',q}^{s'}},
\end{eqnarray*}
which completes the proof of (i).

Next, we prove (ii). Let us rewrite \eqref{e3} as
\begin{align} \label{e28}
\norm{K_{+}(t)\triangle_ku_0}_\infty & \lesssim
|t|^{-\half{n-1-\theta}}2^{k\half{n+1+\theta}}\norm{\triangle_ku_0}_1,
\quad 0\le \theta\le n-1.
\end{align}
\eqref{e5} implies that
\begin{align} \label{e29}
\norm{K_{+}(t)P_{\le 0} u_0}_\infty & \lesssim
|t|^{-\half{n-1-\theta}} \norm{P_{\le 0} u_0}_1, \quad 0\le
\theta\le n-1.
\end{align}
Hence, using the same way as in the proof of (i), we can easily get
the results of (ii).

Following the proofs above, one can get the results of (iii).
Indeed, it suffices to consider the case $|t|\le 1$,  $\theta \in
[0,1]$ and $1+s'-s-2n\delta\ge 0$. If this case occurs, taking
$\theta=n-1$ in \eqref{e28} and \eqref{e29} one has the result, as
desired.
\end{proof}

2 (Beam equation). We now consider the Beam equation, in some
literature it is called fourth order wave equations,
\begin{eqnarray}\label{eq:Beam}
\left \{
\begin{array}{l}
\partial_{tt}u+\Delta^2 u+u=F,\\
u(0)=u_0(x),\ u_t(0)=u_1(x).
\end{array}
\right.
\end{eqnarray}
By Duhamel's principle, we have
\begin{eqnarray*}
u=B'(t)u_0+B(t)u_1-\int_0^tB(t-\tau)F(\tau)d\tau
\end{eqnarray*}
where
\begin{eqnarray*}
B(t)= \omega^{-1}\sin(t\omega), \quad B'(t)=\cos(t \omega), \quad
\omega=\sqrt{I+\Delta^2}.
\end{eqnarray*}
This reduces to the semigroup $B_{\pm}(t):=e^{\pm
it(I+\Delta^2)^{1/2}}$, which corresponding to
$\phi(r)=(1+r^4)^{1/2}$. By simple calculation,
\[
\phi'(r)=2r^3/(1+r^4)^{\half 1}, \quad
\phi''(r)=(6r^2+2r^6)/(1+r^4)^{\half 3},
\]
we know that $\phi$ satisfies (H1), (H2), (H3) and (H4) with
$m_1=\alpha_1=2$, $m_2=\alpha_2=4$.
\begin{proposition}\label{p3}
Assume $2 \leq p\leq \infty$, $\rev p+\rev{p'}=1$, $1\leq q\leq
\infty$, $\delta=\half 1-\rev p$, and $0\leq 2+s'-s$, then
\begin{eqnarray*}
\norm{B(t)g}_{B_{p,q}^s}\leq k(t)\norm{g}_{B_{p',q}^{s'}},
\end{eqnarray*}
where
\begin{eqnarray*}
k(t)= \left \{
\begin{array}{l}
|t|^{\min(1+\half {s'}-\half s-n\delta,\
0)},\quad |t|\leq 1,\\
|t|^{-\half{n\delta}}, \quad |t|\geq1.
\end{array}
\right.
\end{eqnarray*}
\end{proposition}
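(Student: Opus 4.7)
The plan is to mirror the proof of Proposition \ref{p2}, substituting the Beam parameters. Since $\phi(r) = (1 + r^4)^{1/2}$ satisfies (H1)--(H4) with $m_1 = \alpha_1 = 2$ and $m_2 = \alpha_2 = 4$, the identities $m_i = \alpha_i$ let me invoke Remark 2 at high frequency and estimate \eqref{e6} at low frequency. The one new bookkeeping point is that $B(t) = \omega^{-1}\sin(t\omega)$ carries the extra factor $\omega^{-1} \sim 2^{-2k}$ on each Littlewood--Paley piece $\triangle_k$ for $k \geq 1$, which is exactly what the hypothesis $0 \leq 2 + s' - s$ is designed to absorb.

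First I would split $\sin(t\omega)$ into the semigroups $B_\pm(t) = e^{\pm it(I + \Delta^2)^{1/2}}$. For $k \geq 1$, Remark 2 gives
$$\norm{B_\pm(t)\triangle_k u_0}_\infty \lesssim |t|^{-\theta}\, 2^{k(n - 2\theta)}\norm{u_0}_1, \qquad 0 \leq \theta \leq n/2,$$
which, combined with Plancherel and Riesz--Thorin interpolation of weight $2\delta$, yields
$$\norm{B_\pm(t)\triangle_k u_0}_p \lesssim |t|^{-2\theta\delta}\, 2^{2k(n - 2\theta)\delta}\norm{\triangle_k u_0}_{p'}.$$
The extra $\omega^{-1} \sim 2^{-2k}$ then replaces the spatial exponent by $2k((n-2\theta)\delta - 1)$ in passing from $B_\pm(t)$ to $B(t)$. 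For the low-frequency piece I would apply \eqref{e6} with $\theta = \min(n/m_2, n/2) = n/4$ and interpolate with $L^2$ to obtain $\norm{B_\pm(t)P_{\leq 0}u_0}_p \lesssim (1 + |t|)^{-n\delta/2}\norm{u_0}_{p'}$, which passes to $B(t)P_{\leq 0}$ unchanged since $\omega \sim 1$ there.

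For $|t| \geq 1$ I would take $\theta = n/2$ in the high-frequency bound: the $k$th Besov layer becomes $|t|^{-n\delta}\, 2^{k(s - s' - 2)}\cdot 2^{ks'}\norm{\triangle_k u_0}_{p'}$, and $s - s' \leq 2$ keeps $2^{k(s - s' - 2)} \leq 1$; thus the high-frequency rate $|t|^{-n\delta}$ is dominated by the low-frequency rate $|t|^{-n\delta/2}$, producing $k(t) = |t|^{-n\delta/2}$. For $|t| \leq 1$ the low-frequency contribution is uniformly $O(1)$, while the $k$th high-frequency layer takes the form $|t|^{-2\theta\delta}\, 2^{k(s - s' + 2(n - 2\theta)\delta - 2)}\cdot 2^{ks'}\norm{\triangle_k u_0}_{p'}$. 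I would then pick $\theta = \max\bigl(0,\, (s - s' + 2n\delta - 2)/(4\delta)\bigr)$, which the hypothesis $s - s' \leq 2$ keeps inside $[0, n/2]$; this choice simultaneously makes the $2^k$-exponent nonpositive and yields the time exponent $\min(1 + s'/2 - s/2 - n\delta,\, 0)$, matching $k(t)$.

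The main obstacle is essentially this case analysis on the sign of $1 + s'/2 - s/2 - n\delta$ together with the verification that the optimal interpolation parameter $\theta$ always lies in the admissible range $[0, n/2]$; the remaining steps are routine interpolation fully parallel to the Klein--Gordon argument.
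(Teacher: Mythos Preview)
Your proposal is correct and follows essentially the same route as the paper's own proof: the paper likewise splits into low and high frequency, invokes Theorem~\ref{t1}(c) with $\theta=n/4$ and Theorem~\ref{t1}(a) (equivalently Remark~2, since $m_1=\alpha_1$) with the free parameter $\theta\in[0,n/2]$, interpolates with Plancherel, absorbs the $\omega^{-1}\sim 2^{-2k}$ factor, and then makes exactly your choice of $\theta$ in each of the two regimes $|t|\ge 1$ and $|t|\le 1$. The case split on the sign of $2+s'-s-2n\delta$ and the verification that the optimal $\theta$ lands in $[0,n/2]$ precisely because $s-s'\le 2$ are also handled identically there.
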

\begin{proof} First, we prove the case $|t|\geq 1$. It follows from (c) of
Theorem \ref{t1} by setting $\theta=\frac{n}{4}$ and Riesz-Thorin
interpolation theorem that
\begin{eqnarray*}
\norm{B_{+}(t)P_{\leq 0}u_0}_p \lesssim
|t|^{-\half{n}\delta}\norm{P_{\leq 0}u_0}_{p'}.
\end{eqnarray*}
From (a) of Theorem \ref{t1} by setting $\theta=\half n$ and
interpolation, we get for $k>0$,
\begin{eqnarray*}
\norm{B_{+}(t)\triangle_ku_0}_p \lesssim
|t|^{-n\delta}\norm{\triangle_ku_0}_{p'},
\end{eqnarray*}
and then we have
\begin{eqnarray*}
2^{ks}\norm{(I+\Delta^2)^{-1/2}B_{+}(t)\triangle_ku_0}_p \les \
|t|^{-\half{n}\delta}2^{ks'}\norm{\triangle_ku_0}_{p'}.
\end{eqnarray*}
Therefore,
\[
\norm{B(t)g}_{B_{p,q}^s}\leq
|t|^{-\half{n}\delta}\norm{g}_{B_{p',q}^{s'}},
\]
which completes the proof of the proposition in this case.

For $|t|\leq 1$, from (c) of Theorem \ref{t1} by setting $\theta=0$
and interpolation, we get
\begin{eqnarray*}
\norm{(I+\Delta^2)^{-1/2}B_{+}(t)P_{\leq 0}u_0}_p \lesssim
\norm{P_{\leq 0}u_0}_{p'}.
\end{eqnarray*}
For $k>0$, from (a) of Theorem 1 and interpolation, we get for
$0\leq \theta \leq \half n$,
\begin{eqnarray*}
\norm{B_{+}(t)\triangle_ku_0}_p \lesssim
|t|^{-2\theta\delta}2^{2k(n-2\theta)\delta}\norm{\triangle_ku_0}_{p'},
\end{eqnarray*}
and then that
\begin{eqnarray*}
2^{ks}\norm{(I+\Delta^2)^{-1/2}B_{+}(t)\triangle_ku_0}_p \lesssim
|t|^{-2\theta\delta}2^{-k(2+s'-s-2(n-2\theta)\delta)}2^{ks'}\norm{\triangle_ku_0}_{p'}.
\end{eqnarray*}
If $0\leq 2+s'-s\leq 2n\delta$, then we can choose $0\leq \theta
\leq \half{n}$ such that $2+s'-s-2n\delta=-4\theta\delta$. If
$2+s'-s> 2n\delta$, then we choose $\theta=0$. Thus we get
\begin{eqnarray*}
2^{ks}\norm{(I+\Delta^2)^{-1/2}B_{+}(t)\triangle_ku_0}_p \lesssim
|t|^{\min(1+\half{s'}-\half{s}-n\delta,\
0)}2^{ks'}\norm{\triangle_ku_0}_{p'}.
\end{eqnarray*}
Therefore,
\[
\norm{B(t)g}_{B_{p,q}^s}\les
|t|^{\min(1+\half{s'}-\half{s}-n\delta,\
0)}2^{ks'}\norm{g}_{B_{p',q}^{s'}},
\]
which completes the proof of the proposition.
\end{proof}
\begin{corollary}
\label{levan}  Assume $2\leq q< \infty$, $u(t)$ be the solution of
\eqref{eq:Beam} with $F=0$. Then
\begin{equation}\label{eq:Beam2}
\norm{u(t)}_{L^q(\R^n)}\les
k(t)(\norm{u_0}_{W^{2,q'}}+\norm{u_1}_{L^{q'}}),
\end{equation}
where
\begin{eqnarray*}
k(t)= \left \{
\begin{array}{l}
|t|^{\min(1+\frac{n}{q}-\half n,\ 0)},\quad |t|\leq 1,\\
|t|^{\frac{n}{2q}-\frac{n}{4}}, \quad |t|\geq1.
\end{array}
\right.
\end{eqnarray*}
Furthermore, if $u_0=0$ then for $|t|\leq 1$,
\begin{eqnarray*}
\norm{u(t)}_q\les |t|^{1+\frac{n}{q}-\half n}\norm{u_1}_{q'}.
\end{eqnarray*}
\end{corollary}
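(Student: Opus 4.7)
The proof plan is to apply Proposition~\ref{p3} to the Duhamel representation $u(t)=B'(t)u_0+B(t)u_1$ piece by piece, and then convert the resulting Besov-space bounds to the classical $L^q$ and $W^{2,q'}$ norms via the standard Littlewood--Paley embeddings $B^0_{q,2}\hookrightarrow L^q$, $L^{q'}\hookrightarrow B^0_{q',2}$, and $W^{2,q'}\hookrightarrow B^2_{q',2}$ (all valid for $q\ge 2$, $q'\le 2$; here the second subscript is the Besov index, as in Proposition~\ref{p3}).

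For the $B(t)u_1$ piece I apply Proposition~\ref{p3} with $p=q$, Besov index $2$, and $s=s'=0$; the admissibility $0\le 2+s'-s=2$ is automatic, yielding $\|B(t)u_1\|_{B^0_{q,2}}\lesssim k(t)\|u_1\|_{B^0_{q',2}}$, which the embeddings convert to $\|B(t)u_1\|_{L^q}\lesssim k(t)\|u_1\|_{L^{q'}}$. For the $B'(t)u_0=\cos(t\omega)u_0=\frac{1}{2}(B_+(t)+B_-(t))u_0$ piece I rerun the proof of Proposition~\ref{p3} verbatim, but without the $\omega^{-1}$ multiplier: the $2^{-2k}$ gain on each high-frequency block disappears, so the admissibility condition weakens from $0\le 2+s'-s$ to $0\le s'-s$ while the same $k(t)$ survives on the right. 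Choosing $s=0$, $s'=2$ and invoking $W^{2,q'}\hookrightarrow B^2_{q',2}$ yields $\|B'(t)u_0\|_{L^q}\lesssim k(t)\|u_0\|_{W^{2,q'}}$. Adding the two estimates completes the main inequality.

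For the furthermore statement ($u_0=0$, $|t|\le 1$), the bound from Proposition~\ref{p3} already gives the claim when $q>\frac{2n}{n-2}$ (where $1+n/q-n/2<0$ and the $\min$ in $k(t)$ is nontrivial). The subcritical regime $q\le\frac{2n}{n-2}$ is genuinely stronger, since Proposition~\ref{p3} only provides $k(t)=1$ there. To close the gap I combine two dyadic estimates for $B(t)=\omega^{-1}\sin(t\omega)$. First, the pointwise symbol bound $|\omega^{-1}\sin(t\omega)|\le\min(|t|,2^{-2k})$ on the band $|\xi|\sim 2^k$ (for $k>0$), together with Fourier-multiplier theory and Bernstein's inequality, delivers
\[
\|B(t)\triangle_k u_1\|_q\lesssim 2^{2kn\delta}\min(|t|,2^{-2k})\|\triangle_k u_1\|_{q'}.
\]
Second, Theorem~\ref{t1}(a) applied with the endpoint $\theta=n/2$ (admissible because $m_1=\alpha_1=2$ and Remark~2 extends the range to $[0,n/2]$) followed by Riesz--Thorin interpolation with the identity on $L^2$ gives $\|B(t)\triangle_k u_1\|_q\lesssim |t|^{-n\delta}2^{-2k}\|\triangle_k u_1\|_{q'}$. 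Splitting the sum over $k\ge 0$ at the critical scale $2^{2k_0}\sim|t|^{-1}$---using the first bound for $k\le k_0$ and the second for $k>k_0$---both tails sum to $|t|^{1-n\delta}=|t|^{1+n/q-n/2}$; the low-frequency piece $B(t)P_{\le 0}u_1$ contributes only $O(|t|)\|u_1\|_{q'}$ by Bernstein and is subdominant.

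The main obstacle is this last balancing in the subcritical regime: the dispersive estimate alone produces only $|t|^{-n\delta}$ decay for small $|t|$, so the additional factor $|t|^1$ needed to reach the sharp exponent $|t|^{1+n/q-n/2}$ must be extracted from the low-frequency Taylor asymptotics $\omega^{-1}\sin(t\omega)\sim t$. The critical scale $2^{2k_0}\sim|t|^{-1}$ is precisely the threshold at which the multiplier and dispersive bounds coincide and together produce the sharp exponent.
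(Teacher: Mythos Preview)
Your argument for the main estimate \eqref{eq:Beam2} is essentially the paper's: apply Proposition~\ref{p3} with $s=s'=0$ to the $B(t)u_1$ term and its obvious variant (without the $\omega^{-1}$ smoothing, hence with the shifted admissibility $0\le s'-s$) to the $B'(t)u_0$ term, then pass through the Littlewood--Paley embeddings. The paper states this in one line, but your unpacking is accurate.

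For the ``furthermore'' part you take a genuinely different route. The paper obtains the sharp small-time exponent by \emph{interpolation}: it records the endpoint $p=\infty$ case of Proposition~\ref{p3} (which for $n\ge 2$ already gives $\|B(t)u_1\|_{B^0_{\infty,2}}\lesssim |t|^{1-n/2}\|u_1\|_{B^0_{1,2}}$), interpolates with the trivial $L^2$ bound $\|\omega^{-1}\sin(t\omega)u_1\|_2\lesssim |t|\,\|u_1\|_2$, and reads off the intermediate $L^q$ estimate; the case $n=1$ then needs a separate Young-inequality computation because the $p=\infty$ endpoint of Proposition~\ref{p3} gives no small-time gain there. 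Your argument instead balances two dyadic estimates---the Taylor/multiplier bound $2^{2kn\delta}|t|$ for $2^{2k}\lesssim |t|^{-1}$ and the dispersive bound $|t|^{-n\delta}2^{-2k}$ for $2^{2k}\gtrsim |t|^{-1}$---and sums. This is correct (for $q>2$ the geometric sums converge; $q=2$ is the trivial $L^2$ case) and has the mild advantage of treating all dimensions uniformly, at the cost of being more hands-on than the paper's two-line interpolation. Either approach is a legitimate proof.
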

\begin{remark} \rm Corollary \ref{levan} is a slightly modified version of
Theorem 2.1 in \cite{LEV}. Actually, our result is slightly stronger
except for $q=\infty$ which is due to the failure of
Littlewood-Paley theory.
\end{remark}
\begin{proof}
From Duhamel's principle, Proposition \ref{p3} by setting $s=s'=0$
and embedding theorem, we immediately get \eqref{eq:Beam2}. Now we
assume $u_0=0$ and $1+\frac{n}{q}-\half n\geq 0$. If $n\geq 2$, then
from Proposition \ref{p3} we have for $|t|\leq 1$,
\begin{eqnarray*}
\normo{\omega^{-1} \sin(t\omega) u_1}_{B_{\infty,2}^0}\les
|t|^{1-\half n}\norm{u_1}_{B_{1,2}^0},
\end{eqnarray*}
and interpolating this with the trivial estimate,
\[
\normo{\omega^{-1} \sin(t\omega) u_1}_2\les |t|\cdot\norm{u_1}_2,
\]
we get that
\[\norm{u(t)}_q\les |t|^{1+\frac{n}{q}-\half n}\norm{u_1}_{q'}.\]
For $n=1$, it suffices to show
\begin{eqnarray*}
\normo{\omega^{-1} \sin(t\omega) u_1}_{B_{\infty,2}^0}\les
|t|^{\half 1}\norm{u_1}_{B_{1,2}^0}.
\end{eqnarray*}
Using Young's inequality, we can easily prove it. We omit the
details.
\end{proof}

3 (Fourth order Schr\"odinger equation). Finally,  we consider the
fourth order Schr\"odinger equation. It is given by
\begin{eqnarray}
i\partial_{t}u+\Delta^2 u-\Delta u=F, \quad u(0)=u_0(x).
\end{eqnarray}
By Duhamel's principle,
\begin{eqnarray*}
u=U(t)u_0-\int_0^tU(t-\tau)F(\tau)d\tau
\end{eqnarray*}
where
\[
U(t)=e^{it(\Delta^2-\Delta)},
\]
which corresponding to $\phi(r)=r^2+r^4$. By simple calculation, we
know $\phi$ satisfies (H1), (H2), (H3) and (H4) with
$m_1=\alpha_1=4$, $m_2=\alpha_2=2$.
\begin{proposition}
Assume $2\leq p\leq \infty$, $\rev p+\rev{p'}=1$, $1\leq q\leq
\infty$, $\delta=\half 1-\rev p$, $-2n\delta\leq s'-s$, then
\begin{eqnarray*}
\norm{U(t)g}_{B_{p,q}^s}\leq k(t)\norm{g}_{B_{p',q}^{s'}}
\end{eqnarray*}
where
\begin{eqnarray*}
k(t)= \left \{
\begin{array}{l}
|t|^{\rev{4}\min(s'-s-2n\delta,\ 0)},\quad |t|\leq 1,\\
|t|^{-n\delta }, \quad |t|\geq1.
\end{array}
\right.
\end{eqnarray*}
\end{proposition}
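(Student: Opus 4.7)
My plan is to mirror the proof of Proposition~\ref{p3}. The symbol $\phi(r)=r^2+r^4$ satisfies (H1)--(H4) with $m_1=\alpha_1=4$ and $m_2=\alpha_2=2$, so Theorem~\ref{t1} and the Remark following it apply directly. I would split $g$ via the Littlewood--Paley decomposition into the low-frequency piece $P_{\leq 0}g$ and the dyadic pieces $\triangle_k g$ ($k\geq 1$), and treat the regimes $|t|\geq 1$ and $|t|\leq 1$ separately.

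For $|t|\geq 1$, I would apply Theorem~\ref{t1}(c) with $\theta=n/m_2=n/2$ (permissible since $\alpha_2=m_2$ activates the sharper range via (H4)) to get $\norm{U(t)P_{\leq 0}u_0}_\infty\lesssim |t|^{-n/2}\norm{u_0}_1$, and the equivalent form of Theorem~\ref{t1}(a) stated in the Remark (valid because $m_1=\alpha_1$) with $\theta=n/2$ to get $\norm{U(t)\triangle_k u_0}_\infty\lesssim |t|^{-n/2}2^{-kn}\norm{u_0}_1$. Interpolating each with the $L^2$ isometry $\norm{U(t)\cdot}_2=\norm{\cdot}_2$ via Riesz--Thorin yields $\norm{U(t)\triangle_k u_0}_p\lesssim |t|^{-n\delta}2^{-2kn\delta}\norm{\triangle_k u_0}_{p'}$ and its low-frequency analogue. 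Multiplying the $k$-th piece by $2^{ks}$ produces an extra factor $2^{k(s-s'-2n\delta)}$ which is bounded by $1$ precisely under the hypothesis $-2n\delta\leq s'-s$; taking the $\ell^q_k$-norm then gives $\norm{U(t)g}_{B^s_{p,q}}\lesssim |t|^{-n\delta}\norm{g}_{B^{s'}_{p',q}}$.

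For $|t|\leq 1$, the low-frequency piece is easy: Theorem~\ref{t1}(c) with $\theta=0$ combined with Riesz--Thorin gives $\norm{U(t)P_{\leq 0}u_0}_p\lesssim\norm{P_{\leq 0}u_0}_{p'}$, which is bounded by $k(t)\norm{g}_{B^{s'}_{p',q}}$ since $k(t)\geq 1$ on $|t|\leq 1$. For the high-frequency pieces, Theorem~\ref{t1}(a) with free parameter $\theta\in[0,n/2]$ combined with the $L^2$ isometry gives
$$\norm{U(t)\triangle_k u_0}_p\lesssim |t|^{-2\theta\delta}2^{2k(n-4\theta)\delta}\norm{\triangle_k u_0}_{p'}.$$
I would tune $\theta$ so that the $2^k$-power becomes $2^{k(s'-s)}$: in the range $-2n\delta\leq s'-s\leq 2n\delta$ the choice $\theta=(2n\delta-(s'-s))/(8\delta)$ lies in $[0,n/2]$ and yields the time factor $|t|^{(s'-s-2n\delta)/4}$; in the range $s'-s>2n\delta$ the choice $\theta=0$ yields an exponentially summable $k$-gain with no time growth. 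Assembling via $\ell^q_k$-summation gives the claimed $k(t)$.

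The main point requiring care is verifying that the optimal $\theta$ always lies in the admissible interval $[0,n/2]$ of Theorem~\ref{t1}(a); this correspondence is encoded exactly in the hypothesis $-2n\delta\leq s'-s$ and in the cutoff at $s'-s=2n\delta$ inside the $\min$. The degenerate case $p=2$ (i.e.\ $\delta=0$) is trivial because $U(t)$ is unitary on $B^s_{2,q}$ and the hypothesis reduces to $s\leq s'$.
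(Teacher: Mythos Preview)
Your proposal is correct and follows essentially the same approach as the paper's own proof: the paper also treats $|t|\le 1$ by taking $\theta=0$ for $P_{\le 0}$ and tuning $\theta\in[0,n/2]$ in Theorem~\ref{t1}(a) via $s'-s-2n\delta=-8\theta\delta$ (with $\theta=0$ when $s'-s>2n\delta$), and for $|t|\ge 1$ it simply refers back to the argument of Proposition~\ref{p3}, which is precisely what you spell out. Your explicit treatment of the degenerate case $p=2$ is a harmless extra remark not present in the paper.
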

\begin{proof}
\quad For $|t|\leq 1$, it follows from (c) of theorem \ref{t1} by
setting $\theta=0$ that
\begin{eqnarray*}
\norm{U(t)P_{\leq 0}u_0}_p \lesssim \norm{P_{\leq 0}u_0}_{p'}.
\end{eqnarray*}
For $k>0$, from (a) of Theorem \ref{t1} and interpolation, we get
for $0\leq \theta \leq \half n$,
\begin{eqnarray*}
\norm{U(t)\triangle_ku_0}_p \lesssim
|t|^{-2\theta\delta}2^{2k(n-4\theta)\delta}\norm{\triangle_ku_0}_{p'},
\end{eqnarray*}
and then that
\begin{eqnarray*}
2^{ks}\norm{U(t)\triangle_ku_0}_p \lesssim
|t|^{-2\theta\delta}2^{-k(s'-s-2(n-4\theta)\delta)}2^{ks'}\norm{\triangle_ku_0}_{p'}.
\end{eqnarray*}
If $-2n\delta\leq s'-s \leq 2n\delta$, then we can choose
$0\leq\theta\leq \half n$ such that $s'-s-2n\delta=-8\theta\delta$.
If $s'-s > 2n\delta$, then we choose $\theta=0$. Thus we get
\begin{eqnarray*}
2^{ks}\norm{U(t)\triangle_ku_0}_p \lesssim
|t|^{\rev{4}\min(s'-s-2n\delta,\
0)}2^{ks'}\norm{\triangle_ku_0}_{p'}.
\end{eqnarray*}
Therefore, we get
\begin{eqnarray*}
\norm{U(t)g}_{B_{p,q}^s}\leq |t|^{\rev{4}\min(s'-s-2n\delta,\
0)}\norm{g}_{B_{p',q}^{s'}},
\end{eqnarray*}
which completes the proof in the case $|t|\leq 1$.

For the case $|t|\geq 1$, we can follow the same way as in the proof
of Proposition \ref{p3} to get the result, which completes the
proof.
\end{proof}

\section{Nonlinear Klein-Gordon and Beam equations} \label{NLKG-NLB}

We consider the Cauchy problem for the nonlinear Klein-Gordon
equation (NLKG)
\begin{eqnarray}\label{nlklein}
\partial_{tt}u-\Delta u+u=|u|^{\kappa+1},\ \
u(0)=u_0(x),\ u_t(0)=u_1(x).
\end{eqnarray}
By Duhamel's principle, NLKG is equivalent to
\begin{eqnarray*}
u=K'(t)u_0+K(t)u_1-\int_0^tK(t-\tau)|u(\tau)|^{1+\kappa}d\tau.
\end{eqnarray*}
If $\kappa\ge 4/n$, the global well posedness and the scattering
with small data in $H^s$ were studied in
\cite{Pe1,Pe2,Strauss,Wa4,Wa5,WH}. When Strauss \cite{Strauss}
studied the existence of the scattering operators at low energy, an
important critical power $\kappa(n)$ of the following NLKG
\begin{eqnarray}\label{nlklein-1}
\partial_{tt}u+u-\Delta u+|u|^{\kappa+1}u=0
\end{eqnarray}
was discovered, where
$$
\kappa(n)= \frac{2-n+\sqrt{n^2+12n+4}}{2n}.
$$
Strauss \cite{Strauss} obtained the existence of the scattering
operators at low energy of Eq. \eqref{nlklein-1} in the case
$\kappa(n)<\kappa\le 4/(n-1)$. Since Eq. \eqref{nlklein} has no
conservation of energy, the technique in \cite{Strauss} can not be
directly applied for Eq. \eqref{nlklein}. However, using the basic
decay estimates of the Klein-Gordon equation, we have

\begin{theorem} \label{NLKG}
Let $\kappa(n)<\kappa<4/n$,
$\sigma(\kappa)=\frac{\kappa(n+2)}{2(2+\kappa)}$, $(u_0,u_1)\in
H^{\sigma(\kappa)}_{(2+\kappa)/(1+\kappa)}\times
H^{\sigma(\kappa)-1}_{(2+\kappa)/(1+\kappa)}$ with sufficiently
small norm. Then Eq. \eqref{nlklein} has a unique solution
$$
u\in C( \mathbb{R},\  H^{\kappa/(2+\kappa)}) \cap L^{1+\kappa}
(\mathbb{R}, \ L^{2+\kappa}(\mathbb{R}^n)).
$$
\end{theorem}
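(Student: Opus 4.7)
The proof is a small-data contraction mapping argument applied to the Duhamel formulation
$$\Phi(u)(t) = K'(t)u_0 + K(t)u_1 - \int_0^t K(t-\tau)|u(\tau)|^{\kappa+1}\,d\tau,$$
set up in the resolution space $X = C(\mathbb{R}; H^{\kappa/(2+\kappa)}) \cap L^{1+\kappa}(\mathbb{R}; L^{2+\kappa})$. The spatial regularity index $\kappa/(2+\kappa)$ is compatible with the data space because of the Sobolev embedding $H^{\sigma(\kappa)}_{(2+\kappa)/(1+\kappa)} \hookrightarrow H^{\kappa/(2+\kappa)}$, which one verifies from the identity $\sigma(\kappa) - n\kappa/(2(2+\kappa)) = \kappa/(2+\kappa)$.

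The first task is to derive linear and inhomogeneous Strichartz bounds of the form
$$\|K'(t)u_0 + K(t)u_1\|_X \lesssim \|u_0\|_{H^{\sigma(\kappa)}_{p'}} + \|u_1\|_{H^{\sigma(\kappa)-1}_{p'}}, \qquad \left\|\int_0^t K(t-\tau)F(\tau)\,d\tau\right\|_X \lesssim \|F\|_Y,$$
where $p' = (2+\kappa)/(1+\kappa)$ and $Y$ is a mixed-norm space into which $|u|^{\kappa+1}$ embeds when $u \in X$. Both estimates come from Proposition \ref{p2}(iii) at the interpolation parameter $\theta = 1$, combined with the variable Hardy-Littlewood-Sobolev inequality in time (Lemma \ref{l2}) and the duality / $TT^*$ machinery of Proposition \ref{p1}. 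The relation $(n+2)\delta = \sigma(\kappa)$ with $\delta = \kappa/(2(2+\kappa))$ is exactly what matches the data regularity to the space-time scaling of the Strichartz pair $(1+\kappa, 2+\kappa)$.

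Next, H\"older's inequality gives the nonlinear bound
$$\|\,|u|^{\kappa+1}\,\|_Y \lesssim \|u\|_{L^{1+\kappa}_t L^{2+\kappa}_x}^{\kappa+1} \leq \|u\|_X^{\kappa+1},$$
together with the corresponding Lipschitz estimate for the difference via $\bigl||a|^{\kappa+1} - |b|^{\kappa+1}\bigr| \lesssim (|a|^\kappa + |b|^\kappa)|a-b|$. Combining with the linear estimates shows that, for data of sufficiently small size $\varepsilon$, $\Phi$ maps the ball $\{u \in X : \|u\|_X \leq 2C\varepsilon\}$ into itself and is a contraction there; the Banach fixed-point theorem then produces the unique fixed point, and continuity in $H^{\kappa/(2+\kappa)}$ follows from the Strichartz framework by standard density/approximation.

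\textbf{Main obstacle.} The delicate step is verifying that the Strauss range $\kappa(n) < \kappa < 4/n$ is precisely the range in which Lemma \ref{l2} applies to the kernel $k(t)$ produced by Proposition \ref{p2}(iii). The lower bound $\kappa > \kappa(n)$, equivalent to $n\kappa^2+(n-2)\kappa-4>0$, is what forces the long-time decay exponent of $K(t)$ to lie strictly above the HLS threshold associated to the target mixed-norm space, so that the time convolution is bounded; the upper bound $\kappa < 4/n$ plays the symmetric role of controlling the short-time exponent $1+s'-s-2n\delta$ in Proposition \ref{p2}(iii) and keeping it inside the admissible HLS window of Lemma \ref{l2}(d). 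Managing this algebraic bookkeeping for the interpolation parameters $(\theta, s, s')$ and identifying the correct intermediate space $Y$ is the only genuinely nontrivial part; everything else is the standard small-data Strichartz scheme.
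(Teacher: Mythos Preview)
Your overall scheme---contraction in $X=C(\mathbb{R};H^{\kappa/(2+\kappa)})\cap L^{1+\kappa}(\mathbb{R};L^{2+\kappa})$ using the dispersive decay of Proposition~\ref{p2}(iii)---is the paper's. But you route the linear and Duhamel estimates through the $TT^*$ machinery of Proposition~\ref{p1}, and this is both unnecessary and, for $n\geq 4$, unavailable: Proposition~\ref{p1} requires the time exponent $q$ to lie in $E(\theta_1,\theta_2)$, in particular $q\geq 2$, whereas here $q=1+\kappa<1+4/n\leq 2$ once $n\geq 4$. Proposition~\ref{p1} also produces estimates from $L^2$-based data $H^\eta$, not from the $L^{p'}$-based space $H^{\sigma(\kappa)}_{(2+\kappa)/(1+\kappa)}$ in the statement, so it does not furnish the bound $\|K'(t)u_0\|_X\lesssim\|u_0\|_{H^{\sigma(\kappa)}_{p'}}$ you want.

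The paper's argument is more elementary and bypasses both Lemma~\ref{l2} and Proposition~\ref{p1}. From Proposition~\ref{p2}(iii) with $\theta=1$, $s=0$, $s'=\sigma(\kappa)-1$ one has $\|K(t)g\|_{2+\kappa}\lesssim k(t)\|g\|_{H^{\sigma(\kappa)-1}_{p'}}$ with the explicit two-scale kernel $k$. One then simply checks $k\in L^{1+\kappa}(\mathbb{R})$: the long-time condition $(1+\kappa)\frac{n\kappa}{2(2+\kappa)}>1$ is exactly $\kappa>\kappa(n)$, and the short-time condition $(1+\kappa)\frac{(n-2)\kappa}{2(2+\kappa)}<1$ holds throughout the range. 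The free estimate in $L^{1+\kappa}(\mathbb{R};L^{2+\kappa})$ is then immediate, and the Duhamel term follows from Young's inequality in time ($L^{1+\kappa}*L^1\subset L^{1+\kappa}$) together with $\||u|^{\kappa+1}\|_{L^1(\mathbb{R};H^{\sigma(\kappa)-1}_{p'})}\lesssim\|u\|_{L^{1+\kappa}(\mathbb{R};L^{2+\kappa})}^{\kappa+1}$. This last embedding is where $\kappa<4/n$ actually enters: it is equivalent to $\sigma(\kappa)<1$, which is what allows $L^{p'}\hookrightarrow H^{\sigma(\kappa)-1}_{p'}$---not, as you suggest, to keep the short-time exponent inside an HLS window.
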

\begin{proof}
We present a quite simple proof. Using the basic decay of $K(t)$ and
$K'(t)$, we have
$$
\|K'(t) u_0 \|_{2+\kappa} \lesssim k(t)
\|u_0\|_{H^{\sigma(\kappa)}_{(2+\kappa)/(1+\kappa)}}, \quad \|K(t)
u_1 \|_{2+\kappa} \lesssim k(t)
\|u_1\|_{H^{\sigma(\kappa)-1}_{(2+\kappa)/(1+\kappa)}},
$$
where
$$
k(t)= \left \{\begin{array}{ll}
|t|^{\min(\frac{\kappa(2-n)}{2(2+\kappa)},\
0)},\quad & |t|\leq 1,\\
|t|^{-\frac{\kappa n}{2(2+\kappa)}}, \quad & |t|\geq 1.
\end{array}
\right.
$$
Noticing that if $\kappa(n)<\kappa<4/n$, then we have
$$
(1+\kappa)\frac{\kappa(n-2)}{2(2+\kappa)}<1, \ \
(1+\kappa)\frac{\kappa n}{2(2+\kappa)}>1, \quad \sigma(\kappa)<1.
$$
It follows that $k(\cdot)\in L^{1+\kappa}(\mathbb{R}^n)$ and
$$
\|K'(t) u_0 \|_{L^{1+\kappa}(\mathbb{R}, \ L^{2+\kappa})} \lesssim
\|u_0\|_{H^{\sigma(\kappa)}_{(2+\kappa)/(1+\kappa)}}, \quad \|K(t)
u_1 \|_{L^{1+\kappa}(\mathbb{R}, \ L^{2+\kappa})} \lesssim
\|u_1\|_{H^{\sigma(\kappa)-1}_{(2+\kappa)/(1+\kappa)}}.
$$
In view of Young's and H\"older's inequalities,
\begin{align*}
\normo{\int_0^tK(t-\tau)|u(\tau)|^{1+\kappa}d\tau}_{L^{1+\kappa}(\mathbb{R},
\ L^{2+\kappa})} & \lesssim
\||u|^{\kappa+1}\|_{L^{1+\kappa}(\mathbb{R}, \
H^{\sigma(\kappa)-1}_{(2+\kappa)/(1+\kappa)})} \\
& \lesssim \|u\|^{\kappa+1}_{L^{1+\kappa}(\mathbb{R}, \
L^{2+\kappa})}.
\end{align*}
Taking $M=2C(\|u_0\|_{H^{\sigma(\kappa)}_{(2+\kappa)/(1+\kappa)}}+
\|u_1\|_{H^{\sigma(\kappa)-1}_{(2+\kappa)/(1+\kappa)}})$ and
$$
X=\{ u\in L^{1+\kappa}(\mathbb{R}, \ L^{2+\kappa}):\
\|u\|_{L^{1+\kappa}(\mathbb{R}, \ L^{2+\kappa})} \le M\}.
$$
Observing the mapping
\begin{eqnarray*}
\mathscr{T}: u \to
K'(t)u_0+K(t)u_1-\int_0^tK(t-\tau)|u(\tau)|^{1+\kappa}d\tau,
\end{eqnarray*}
we have
\begin{eqnarray*}
\|\mathscr{T} u\|_X  \le  M/2 + CM^{1+\kappa}.
\end{eqnarray*}
If $C M^\kappa\le 1/2$, we see that $\mathscr{T}: X\to X$ is a
contraction mapping. Hence, Eq. \eqref{nlklein} has a unique
solution $u\in X$.  Moreover,
$$
\|u\|_{L^\infty (\mathbb{R}, \ H^{\kappa/(2+\kappa)})} \lesssim
\|u_0\|_{H^{\kappa/(2+\kappa)}} + \|u_1\|_{H^{-2/(2+\kappa)}} +
\||u|^{\kappa+1}\|_{L^1(\mathbb{R}, H^{-2/(2+\kappa)})}.
$$
Using the embedding $H^{\sigma(\kappa)}_{(2+\kappa)/(1+\kappa)}
\subset H^{\kappa/(2+\kappa)}$, we immediately get that  $u\in
L^\infty (\mathbb{R}, \ H^{\kappa/(2+\kappa)})$.
\end{proof}

Using the method as in the NLKG, we consider the Cauchy problem for
the nonlinear Beam equation (NLB)
\begin{eqnarray}\label{NLB}
\partial_{tt}u+\Delta^2 u+u=|u|^{\kappa+1},\ \
u(0)=u_0(x),\ u_t(0)=u_1(x).
\end{eqnarray}
By Duhamel's principle, NLB is equivalent to
\begin{eqnarray*}
u=B'(t)u_0+B(t)u_1-\int_0^tB(t-\tau)|u(\tau)|^{1+\kappa}d\tau.
\end{eqnarray*}
If $\kappa\ge 8/n$, the global well posedness and scattering with
small data for the NLB were studied in \cite{LEV}, \cite{Wa2}.
Following the same ideas as in the NLKG, we find a critical power
$$
\kappa_B(n)= \frac{4-n+\sqrt{n^2+24n+16}}{2n}.
$$

\begin{theorem}
Let $\kappa_B(n)<\kappa<8/n$,
$\sigma(\kappa)=\frac{n\kappa}{(2+\kappa)}-\frac{2}{1+\kappa}$,
$\sigma(\kappa)<s \leq 2$, $s_2=s-\frac{\kappa n}{2(2+\kappa)}$,
$(u_0,u_1)\in H^{s}_{(2+\kappa)/(1+\kappa)}\times
H^{s-2}_{(2+\kappa)/(1+\kappa)}$ with sufficiently small norm. Then
Eq. \eqref{NLB} has a unique solution
$$
u\in C( \mathbb{R},\  H^{s_2}) \cap L^{1+\kappa} (\mathbb{R}, \
L^{2+\kappa}(\mathbb{R}^n)).
$$
\end{theorem}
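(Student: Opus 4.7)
The plan is to mirror the proof of Theorem \ref{NLKG} closely, using Proposition \ref{p3} in place of Proposition \ref{p2}. The strategy is a Banach fixed point argument on a small ball in the space $X = L^{1+\kappa}(\mathbb{R},L^{2+\kappa}(\mathbb{R}^n))$. First I would apply Proposition \ref{p3} with $p=2+\kappa$, $p'=(2+\kappa)/(1+\kappa)$ and $\delta=\kappa/(2(2+\kappa))$ to obtain the decay estimates
\[
\|B'(t)u_0\|_{2+\kappa}\lesssim k(t)\|u_0\|_{H^s_{p'}},\qquad \|B(t)u_1\|_{2+\kappa}\lesssim k(t)\|u_1\|_{H^{s-2}_{p'}},
\]
where
\[
k(t)=\begin{cases}|t|^{\min\left(\frac{s}{2}-\frac{n\kappa}{2(2+\kappa)},\,0\right)},&|t|\le 1,\\ |t|^{-\frac{n\kappa}{4(2+\kappa)}},&|t|\ge 1.\end{cases}
\]
For $B(t)$ this is a direct application of Proposition \ref{p3} with source regularity $s-2$ and target $L^{2+\kappa}$; the same kernel appears for $B'(t)=\cos(t\omega)$ if one repeats the proof of Proposition \ref{p3} with the $\omega^{-1}$ factor omitted (the $+1$ shift inside the min cancels the $-2/2$ shift of the source index). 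The passage from the Besov formulation of Proposition \ref{p3} to the Sobolev one is absorbed by the embeddings $H^{s'}_{p'}\hookrightarrow B^{s'}_{p',2}$ and $B^0_{p,2}\hookrightarrow L^p$, both valid for $p\ge 2\ge p'$.

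Next, I would verify $k\in L^{1+\kappa}(\mathbb{R})$. Integrability at infinity requires $(1+\kappa)\cdot n\kappa/(4(2+\kappa))>1$, which rearranges to $n\kappa^2+(n-4)\kappa-8>0$; the positive root of this quadratic is exactly $\kappa_B(n)=(4-n+\sqrt{n^2+24n+16})/(2n)$, so the lower bound on $\kappa$ supplies this. Integrability near the origin (when the exponent is negative) requires $(1+\kappa)\bigl(n\kappa/(2(2+\kappa))-s/2\bigr)<1$, which rearranges to $s>n\kappa/(2+\kappa)-2/(1+\kappa)=\sigma(\kappa)$, exactly matching the hypothesis $\sigma(\kappa)<s$. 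The upper bound $\kappa<8/n$ together with $s\le 2$ guarantees $\sigma(\kappa)<2$, so that an admissible $s$ actually exists.

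For the nonlinear term, since $s-2\le 0$ the trivial embedding $L^{p'}\hookrightarrow H^{s-2}_{p'}$ combined with H\"older's inequality gives $\||u|^{1+\kappa}\|_{H^{s-2}_{p'}}\lesssim\||u|^{1+\kappa}\|_{L^{p'}}=\|u\|_{L^{2+\kappa}}^{1+\kappa}$; Young's convolution inequality with $k\in L^{1+\kappa}(\mathbb{R})$ and the exponent relation $1/(1+\kappa)+1=1+1/(1+\kappa)$ then yields
\[
\Big\|\int_0^t B(t-\tau)|u(\tau)|^{1+\kappa}d\tau\Big\|_{L^{1+\kappa}_tL^{2+\kappa}}\lesssim\|u\|_X^{1+\kappa}.
\]
Setting $M\sim\|u_0\|_{H^s_{p'}}+\|u_1\|_{H^{s-2}_{p'}}$ and applying the Banach fixed point theorem on the closed ball of radius $M$ in $X$ produces the unique solution $u\in X$. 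For the $C(\mathbb{R},H^{s_2})$ part, I would take $L^\infty$-in-time bounds of the Duhamel representation and use the Sobolev embedding $H^s_{p'}\hookrightarrow H^{s-n\delta}=H^{s_2}$ (valid since $s-s_2=n/p'-n/2=n\delta=n\kappa/(2(2+\kappa))$, exactly matching the statement), together with $H^{s-2}_{p'}\hookrightarrow H^{s_2-2}$ for $u_1$ and the same H\"older bound for the nonlinearity. The main obstacle is the index bookkeeping: checking that the two integrability conditions align precisely with the quadratic defining $\kappa_B(n)$ and with the explicit formula for $\sigma(\kappa)$, and that deriving the kernel $k(t)$ for $B'(t)$ (not covered verbatim by Proposition \ref{p3}) follows the same scheme; once this is in hand, the argument is routine and strictly parallel to Theorem \ref{NLKG}.
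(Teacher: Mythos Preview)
Your proposal is correct and follows essentially the same route as the paper's own proof: the same decay kernel $k(t)$ obtained from Proposition \ref{p3}, the same verification that $k\in L^{1+\kappa}(\mathbb{R})$ via the conditions $\kappa>\kappa_B(n)$ and $s>\sigma(\kappa)$, the same Young--H\"older treatment of the Duhamel term with $\||u|^{1+\kappa}\|_{L^1_t H^{s-2}_{p'}}$, and the same contraction in $L^{1+\kappa}_t L^{2+\kappa}_x$. Your write-up in fact supplies more detail than the paper (the explicit quadratic yielding $\kappa_B(n)$, the Besov--Sobolev embeddings, and the $H^s_{p'}\hookrightarrow H^{s_2}$ step for the $C(\mathbb{R},H^{s_2})$ conclusion), but the argument is the same.
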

\begin{proof}
The proof is similar to that of Theorem \ref{NLKG}. It is easy to
verify that $\sigma(\kappa)\in (0,2)$. Using the basic decay of
$B(t)$ and $B'(t)$, we have
$$
\|B'(t) u_0 \|_{2+\kappa} \lesssim k(t)
\|u_0\|_{H^{s}_{(2+\kappa)/(1+\kappa)}}, \quad \|B(t) u_1
\|_{2+\kappa} \lesssim k(t)
\|u_1\|_{H^{s-2}_{(2+\kappa)/(1+\kappa)}},
$$
where
$$
k(t)= \left \{\begin{array}{ll}
|t|^{\min(\frac{s}{2}-\frac{n\kappa}{2(2+\kappa)},\
0)},\quad & |t|\leq 1,\\
|t|^{-\frac{\kappa n}{4(2+\kappa)}}, \quad & |t|\geq 1.
\end{array}
\right.
$$
Noticing that if $\kappa_B(n)<\kappa<8/n$ and $\sigma(\kappa)<s\leq
2$, then we have
$$
(1+\kappa)\brk{\frac{n\kappa}{2(2+\kappa)}-\frac{s}{2}}<1, \ \
(1+\kappa)\frac{\kappa n}{4(2+\kappa)}>1.
$$
It follows that $k(\cdot)\in L^{1+\kappa}(\mathbb{R}^n)$ and
$$
\|B'(t) u_0 \|_{L^{1+\kappa}(\mathbb{R}, \ L^{2+\kappa})} \lesssim
\|u_0\|_{H^{s}_{(2+\kappa)/(1+\kappa)}}, \quad \|B(t) u_1
\|_{L^{1+\kappa}(\mathbb{R}, \ L^{2+\kappa})} \lesssim
\|u_1\|_{H^{s-2}_{(2+\kappa)/(1+\kappa)}}.
$$
In view of Young's and H\"older's inequalities,
\begin{align*}
\normo{\int_0^t
B(t-\tau)|u(\tau)|^{1+\kappa}d\tau}_{L^{1+\kappa}(\mathbb{R}, \
L^{2+\kappa})} & \lesssim \||u|^{\kappa+1}\|_{L^{1}(\mathbb{R}, \
H^{s-2}_{(2+\kappa)/(1+\kappa)})} \\
& \lesssim \|u\|^{\kappa+1}_{L^{1+\kappa}(\mathbb{R}, \
L^{2+\kappa})}.
\end{align*}
Then, following the same way as in the proof of Theorem \ref{NLKG},
we can prove the result, as desired.
\end{proof}

\noindent{\bf Acknowledgment.}   This work is supported in part by
the NSF of China, grants 10471002, 10571004; RFDP of China, grants
20060001010; and the 973 Project Foundation of China, grant
2006CB805902.

\footnotesize

\end{document}